\theoremstyle{plain}
\newtheorem{Theorem}{Theorem}[section]
\newtheorem{theorem0}{Thorem}
\newtheorem{Proposition}[Theorem]{Proposition}
\newtheorem{Lemma}[Theorem]{Lemma}
\newtheorem{Corollary}[Theorem]{Corollary}
\theoremstyle{definition} 
\newtheorem{Algorithm}[Theorem]{Algorithm}
\newtheorem{Definition}[Theorem]{Definition}
\newtheorem{Proposition-Definition}[Theorem]{Proposition-Definition}
\newtheorem{Remark}[Theorem]{Remark}
\newtheorem{Example}[Theorem]{Example}
\newtheorem{Observation}[Theorem]{Observation}
\def\T{{\mathcal{T}}}
\def\Z{{\varmathbb{Z}}}
\def\N{{\varmathbb{N}}}
\def\Q{{\varmathbb{Q}}} 
\def\R{{\varmathbb{R}}} 
\def\F{{\varmathbb{F}}}
\def\k{{\varmathbb{K}}}
\def\x{{\boldsymbol{x}}}
\def\a{{\boldsymbol{a}}}
\def\b{{\boldsymbol{b}}}
\def\c{{\boldsymbol{c}}}
\def\pR{\widetilde{\R}_+}
\def\pN{\widetilde{\N}_+}
\def\w{{\boldsymbol{w}}}
\def\bw{{\boldsymbol{\omega}}}
\def\bsxi{{\boldsymbol{\xi}}}
\def\bseta{{\boldsymbol{\eta}}}
\def\M{{\mathcal{M}}}
\def\p{{\mathfrak{p}}}
\def\P{{\mathfrak{P}}}
\def\q{{\mathfrak{q}}}
\def\Coker{\mathop{\mathrm{Coker}}\nolimits}
\def\Semi{\mathop{\mathrm{Semi}}\nolimits}
\def\Prim{\mathop{\mathrm{Prim}}\nolimits}
\def\ch{\mathop{\mathrm{char}}\nolimits}
\def\Min{\mathop{\mathrm{Min}}\nolimits}
\def\Ass{\mathop{\mathrm{Ass}}\nolimits}
\def\LT{\mathop{\mathrm{LT}}\nolimits}
\def\Ker{\mathop{\mathrm{Ker}}\nolimits}
\def\deg{\mathop{\mathrm{deg}}\nolimits}
\def\int{\mathop{\mathrm{int}}\nolimits}
\def\initial{\mathop{\mathrm{in}}\nolimits}
\def\ord{\mathop{\mathrm{ord}}\nolimits}
\def\notdivide{\setbox1=\hbox{$|$\llap{\hbox{/}\kern-0.75pt}}\mathrel{\box1}}
\providecommand{\abs}[1]{\lvert#1\rvert}
\def\subst{\mbox{$\ \longleftarrow$}}                        
\newcommand{\pscif}[1]{{\mbox{\textit{\underline{\textbf{if}}}}}\ \big(#1\big) \mbox{\textit{\underline{\textbf{then}}}}}
\newcommand{\pscendif}{{\mbox{\textit{\underline{\textbf{end if}}}}}\ \ }
\newcommand{\pscforall}[1]{{\mbox{\textit{\underline{\textbf{for all}}}}}\ \big(#1\big) \mbox{\textit{\underline{\textbf{do}}}}}
\newcommand{\pscendforall}{{\mbox{\textit{\underline{\textbf{end for}}}}}\ \ }
\newcommand{\pscwhile}[1]{{\mbox{\textit{\underline{\textbf{while}}}}}\ \big(#1\big) \mbox{\textit{\underline{\textbf{do}}}}}
\newcommand{\pscendwhile}{{\mbox{\textit{\underline{\textbf{end while}}}}}\ \ }
\newcommand{\pscreturn}{{\mbox{\textit{\underline{\textbf{return}}}}}\ \ }
\def\pscinputstr{Input:}
\def\pscoutputstr{Output:}
\def\pscinputname{\makebox[\pscioboxwidth][l]{\textbf{\pscinputstr} }}
\def\pscoutputname{\makebox[\pscioboxwidth][l]{\textbf{\pscoutputstr} }}
\newcounter{pseudocode}
\def\pseudocodename{\textbf{Algorithm}}%
\def\thepseudocode{\thesection.\arabic{pseudocode}}%
\def\fps@pseudocode{!htb}%
\def\ftype@pseudocode{4}%
\def\ext@pseudocode{loc}%
\def\fnum@pseudocode{\pseudocodename~\thepseudocode}%
{
	\@dblfloat{pseudocode}%
	\pscindent=0zw%
	\caption{\textbf{#1}}\label{#2}%
	\rule[1pt]{\linewidth}{1pt}%
	\newcount\pscinum\pscinum=0%
	\newcount\psconum\psconum=0%
	\begin{list}{\theenumi}{
		\topsep.0\baselineskip\partopsep0pt%
		\leftmargin1zw%
		\labelwidth2zw\labelsep0.5zw%
		\itemindent0pt\rightmargin0pt
		\listparindent0pt\itemsep0pt\parsep0pt%
		\usecounter{enumi}%
		\def\makelabel##1{\ifx##1n{\ }\else{\makebox[2zw][r]{##1:}}\fi}%
		\newcommand{\pscinput}{\item[n]\ifnum\pscinum=0{\pscinputname}\else{\hspace{\pscioboxwidth}}\fi\advance \pscinum by 1}%
		\newcommand{\pscoutput}{\item[n]\ifnum\psconum=0{\pscoutputname}\else{\hspace{\pscioboxwidth}}\fi\advance \psconum by 1}%
	}
}
{
	\end{list}
	\rule[1pt]{\linewidth}{1pt}
	\end@dblfloat
}
\begin{document}
\title{Irreducibility criterion for algebroid curves}
\author{Takafumi Shibuta
}
\address{Department of Mathematics, Rikkyo University, 
Nishi-Ikebukuro, Tokyo 171-8501, Japan}
\email{shibuta@rikkyo.ac.jp}
\date{}
\baselineskip 15pt
\footskip = 32pt
\begin{abstract}
The purpose of this paper is to give an algorithm for deciding the irreducibility of reduced algebroid curves over an algebraically closed field of arbitrary characteristic. 
To do this, we introduce a new notion of local tropical variety which is a straightforward extension of tropism introduced by Maurer, 
and then give irreducibility criterion for algebroid curves in terms local tropical varieties. 
We also give an algorithm for computing the value-semigroups of irreducible algebroid curves. 
Combining the irreducibility criterion and the algorithm for computing the value-semigroups, we obtain 
an algorithm for deciding the irreducibility of algebroid curves. 
\end{abstract}
\maketitle
\tableofcontents
\section{Introduction}
Throughout this paper, $k$ is a perfect field of arbitrary characteristic, and $K$ is the algebraic closure of $k$. 
An {\it algebroid curve} over $k$ is a Noetherian local ring $A$ such that: 
\begin{enumerate}
\item $A$ is complete. 
\item $A$ is unmixed and of Krull dimension one (i.e. $\dim \p=1$ for all $\p\in \Ass A$). 
\item $A$ has a coefficient field $k$. 
\end{enumerate}
If $A$ is domain, we say that $A$ is {\it irreducible}, 
and if $A$ has no nilpotent element, we say that $A$ is {\it reduced}. 
By Cohen's theorem, $A$ is isomorphic to $k\llbracket \x \rrbracket/I$ for some $k\llbracket \x\rrbracket=k\llbracket x_1,\dots,x_r\rrbracket$ and $I$. 
The purpose of this paper is to give an algorithm for deciding the irreducibility of a reduced algebroid curve $K\llbracket \x\rrbracket/I$ 
over the algebraically closed field $K$. 
As Teitelbaum pointed out, the question of determining whether or not a given algebroid curve is reduced is uncomputable in general (\cite{Teitelbaum} Lemma 1). 
When we consider the implementation, we assume that $k$ is a computable perfect field (e.g. $\Q$ or $\F_p$), 
and an ideal $I\subset K\llbracket \x\rrbracket$ is generated by polynomials over $k$. 

For plane algebroid curves over algebraic closed field of characteristic zero, Abhyankar \cite{Ab} has given an irreducibility criterion for bivariate power series, 
and Kuo \cite{Kuo} has presented a simple algorithm for deciding irreducibility of bivariate power series. 

In Section 3, we will give a criterion for an algebroid curve $k\llbracket \x \rrbracket/I$ to be irreducible. 
To do this, we introduce a new notion of {\em local tropical variety} in Section 2. 
Local tropical variety is a straightforward extension of {\em tropism} introduced by Maurer \cite{Maurer}, 
and an analogue of tropical variety in formal power series rings. 
See \cite{BG}, \cite{EM}, \cite{SS}, \cite{Sturmfels2} for tropical varieties. 
We denote by $\N=\{0,1,2,3,\dots\}$, $\N_+$, and $\R_+$, the set of non-negative integers, positive integers, and positive real numbers, respectively. 
We set $\pN=\N_+\cup \{\infty\}$ and $\pR=\R_+\cup \{\infty\}$. 
For $\w=(w_1,\dots,w_r)\in \pN^r$, we set $\gcd(\w)=\gcd(w_i\mid w_i\neq \infty)$. 
The local tropical varieties $\T_{\rm loc}(I)$ of $I\subset k\llbracket \x\rrbracket$ is defined as the set of weight vectors $\w\in \pN^r$ 
such that the initial ideal of $I$ with respect to $\w$ contains no monomial (Definition \ref{ltv}). 
We say that $\w\in \pN^r$ is a tropism of $I$ if $\in \T_{\rm loc}(I)$ and $\gcd(\w)=1$. 
We set $\bw(I)=(\int(x_1;I),\dots,\int(x_r;I))\in \pN^r$ for $I\subset k\llbracket \x\rrbracket$ 
where $\int(x_i; I)=\dim_k k\llbracket \x\rrbracket/ \langle x_i, I\rangle$ (Definition \ref{def of wI}). 
The next theorem is the irreducibility criterion for algebroid curves in terms of local tropical varieties. 
\begin{theorem0}[Theorem \ref{primeness}]\label{intro primeness}
Let $I\subset k\llbracket \x\rrbracket$ be an unmixed ideal of dimension one. 
Then the following hold. 
\begin{enumerate}
\item 
If $\T_{\rm loc}(I)\neq \pR\cdot \bw(I)$ (e.g. $\initial_{\bw(I)}(I)$ contains monomials, or $I$ has at least two tropisms) 
then $I$ is not prime. 
\item 
If $\bw(I)$ is a tropism of $I$, then $I$ is prime. 
\item 
Let $\{\w_1,\dots,\w_l\}\subset \pN^r$ be the set of the tropisms of $I$. 
If $\bw(I)\neq(\infty,\dots,\infty)$ and $\bw(I)=\w_1+\dots+\w_l$, 
then there exist prime ideals $\p_j\subset k\llbracket\x\rrbracket$ for $1\le j\le l$ 
such that $\bw(\p_j)=\w_j$ and $I=\bigcap_{j=1}^l \p_j$. 
\end{enumerate}
\end{theorem0}

In Section 5, 
we will present an algorithm for deciding the irreducibility of reduced algebroid curves (Algorithm \ref{main algorithm}). 
For a one-dimensional radical ideal $I\subset K\llbracket x_1,\dots,x_r\rrbracket$, 
Algorithm \ref{main algorithm} compute an ideal $J\subset K\llbracket x_1,\dots,x_{r'}\rrbracket$, $r'\ge r$, 
satisfying the following; 
\begin{enumerate}
\item $K\llbracket x_1,\dots,x_r\rrbracket/I\cong K\llbracket x_1,\dots,x_{r'}\rrbracket/J$, 
\item if $I$ is prime, then $\bw(J)$ is a tropism of $J$, 
\item if $I$ is not prime, then $J$ has at least two tropisms. 
\end{enumerate}
To do this, we introduce a new notion of {\em local SAGBI basis} which is a variant of SAGBI basis (or canonical subalgebra basis) in local rings in Section 4, 
and then give an algorithm for computing the value-semigroup of an irreducible algebroid curve (Algorithm \ref{Semi(p)}). 
We obtain Algorithm \ref{main algorithm} by combining Algorithm \ref{Semi(p)} and the above irreducibility criterion. 
On smaller examples, Algorithm \ref{main algorithm} can be carried out by hand, and  
Algorithm \ref{main algorithm} is implementable for computer algebra systems. 
\section{Local tropical varieties}
\subsection{Definition of local tropical varieties}
In this section, we introduce a notion of local tropical varieties, and prove its basic properties. 

As in Introduction, $k$ is a perfect field of arbitrary characteristic, and $K$ is the algebraic closure of $k$. 
For $\x=(x_1,\dots,x_r)$ and $\a=(a_1,\dots,a_r)$, we use multi-index notation $\x^\a=x_1^{a_1}\dots x_r^{a_r}$. 
Let $k\llbracket\x\rrbracket=k\llbracket x_1,\dots,x_r\rrbracket$ be a formal power series ring over the algebraically closed field $k$. 
Recall that $\pN=\N_+\cup \{\infty\}$, and $\pR=\R_+\cup \{\infty\}$. 
\begin{Definition}
Let $\w=(w_1,\dots,w_r)\in \pR^r$. 
For $f=\sum_{\a\in\N^r} c_\a \x^\a\in {k}\llbracket\x\rrbracket$, $c_\a\in {k}$, we define the {\it order} of $f$ with respect to $\w$ as 
\[
\ord_\w(f)=\min\{\w\cdot\a\mid c_\a\neq 0 \}\in \pR\cup \{0\}, 
\]
where $\w\cdot\a=\sum w_ia_i\in \pR$. 
We set $\ord_\w(0)=\infty$. 
If $\ord_\w(f)<\infty$, we define the {\it initial form} of $f$ as 
\[
\initial_\w(f)=\sum_{\w\cdot\a=\ord_\w(f)}c_\a \x^\a\in {k}[x_i\mid w_i\neq \infty].
\]
If $\ord_\w(f)=\infty$, we set $\initial_\w(f)=0$. 

For an ideal $I\subset {k}\llbracket\x\rrbracket$, we call 
$\initial_\w(I)=\langle \initial_\w(f)\mid f\in I\rangle\subset {k}[x_i\mid w_i\neq \infty]$ the {\it initial ideal} of $I$ with respect to $\w$ . 
\end{Definition}
\begin{Example}
Let $\w=(1,2,\infty)$. 
Then $\ord_\w(x^2+y+xy+z)=\min\{2,2,3,\infty\}=2$, $\initial_\w(x^2+y+xy+z)=x^2+y$, 
and $\ord_\w(z)=\infty$, $\initial_\w(z)=0$. 
\end{Example}
Now, we define local tropical varieties. 
Our definition is different form Touda's definition (\cite{Touda} Definition 4.7). 
For $\w=(w_1,\dots,w_r)\in \pN^r$, $\w\neq(\infty,\dots,\infty)$, we set $\gcd(\w)=\gcd\{w_i\mid w_i\neq\infty\}$. 
We say that $\w$ is {\em primitive} if $\gcd (\w) = 1$. 
\begin{Definition}\label{ltv}
Let $I\subset k\llbracket\x\rrbracket$ be an ideal. We call 
\[
\T_{\rm loc}(I)=\{\w\in\pR^r \mid \initial_\w(I)\mbox{ contains no monomial }\}. 
\]
the {\it local tropical variety} of $I$. 
We say that an element $\w\in \T_{\rm loc}(I)\cap \pN^r$ is a {\it tropism} of $I$ if $\w$ is primitive. 
\end{Definition}
The initial ideal $\initial_\w(I)$ is homogeneous with respect to $\w$, 
and thus $\initial_\w(I)$ contains no monomial if and only if $\initial_\w(f)$ is not monomial for any $f\in I$. 
The topological closure of $\T_{\rm loc}(I)\cap \R_+^r$ in $\R^r$ is a rational polyhedral complex (\cite{BT} Theorem 2.0.4). 
If $I$ is generated by polynomials, then $\T_{\rm loc}(I)$ is computable \cite{BJSS}. 

Similarly to the usual varieties, local tropical varieties satisfy the following properties. 
\begin{Lemma}\label{intersection of ideals}
Let $I,J, I_1,\dots,I_l\subset k\llbracket\x\rrbracket$ be ideals, and $V,W\subset \M^{\oplus r}$. Then the following hold. 
\begin{enumerate}
\item If $I\subset J$, then $\T_{\rm loc}(I)\supset\T_{\rm loc}(J)$. 
\item $\sqrt{\initial_\w(I)}=\sqrt{\initial_\w(\sqrt{I})}$ for any $\w\in \pR^r$. 
In particular, $\T_{\rm loc}(I)=\T_{\rm loc}(\sqrt{I})$. 
\item $\T_{\rm loc}(\bigcap_{i=1}^l I_i)=\bigcup_{i=1}^l\T_{\rm loc}(I_i)$. 
\end{enumerate}
\end{Lemma}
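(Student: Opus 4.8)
The plan is to treat the three items in order, since later parts lean on earlier ones. Part (1) is essentially immediate from the definitions: if $f \in I \subset J$ then $\initial_\w(f)$ is an initial form coming from $J$ as well, so $\initial_\w(I) \subset \initial_\w(J)$; hence if $\initial_\w(J)$ contains no monomial, neither does $\initial_\w(I)$, giving $\T_{\rm loc}(I) \supset \T_{\rm loc}(J)$. The only subtlety worth a sentence is the remark already made after Definition \ref{ltv}, namely that $\initial_\w(I)$ is $\w$-homogeneous, so "contains no monomial" is equivalent to "$\initial_\w(f)$ is not a monomial for every $f \in I$"; I would invoke that characterization throughout.

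For part (2), the inclusion $\sqrt{\initial_\w(\sqrt I)} \supset \sqrt{\initial_\w(I)}$ follows from $I \subset \sqrt I$ and monotonicity of $\initial_\w$. For the reverse inclusion I would show $\initial_\w(g) \in \sqrt{\initial_\w(I)}$ for each $g \in \sqrt I$: pick $n$ with $g^n \in I$, and observe that $\initial_\w(g^n) = \initial_\w(g)^n$ because the lowest-$\w$-order terms of a product multiply (the leading $\w$-forms cannot cancel, as $k[x_i \mid w_i \ne \infty]$ is a domain and $\w$-grading is multiplicative). Thus $\initial_\w(g)^n = \initial_\w(g^n) \in \initial_\w(I)$, so $\initial_\w(g) \in \sqrt{\initial_\w(I)}$, and taking the ideal generated by all such gives $\initial_\w(\sqrt I) \subset \sqrt{\initial_\w(I)}$, hence equality of radicals. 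The statement $\T_{\rm loc}(I) = \T_{\rm loc}(\sqrt I)$ then drops out: a radical ideal contains a monomial iff its radical does (a monomial $\x^\a$ lies in a radical ideal iff some power $\x^{m\a}$ does), so "$\initial_\w(I)$ contains no monomial" is unchanged under passing to radicals on both the inside and the outside.

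For part (3), one inclusion is just part (1) applied to $\bigcap I_i \subset I_j$ for each $j$, giving $\T_{\rm loc}(\bigcap I_i) \supset \bigcup \T_{\rm loc}(I_i)$. The content is the reverse: if $\w \notin \bigcup_j \T_{\rm loc}(I_j)$, then for each $j$ there is a monomial $\x^{\a_j} \in \initial_\w(I_j)$; I want to produce a monomial in $\initial_\w(\bigcap_j I_j)$. The natural move is to take a common multiple, say $\x^\a$ with $\a = \sum_j \a_j$ (or the exponentwise max scaled up), and for each $j$ pick $f_j \in I_j$ with $\initial_\w(f_j)$ equal to — or dividing — that monomial; multiplying suitable monomial cofactors onto each $f_j$ one arranges $g_j \in I_j$ with $\initial_\w(g_j) = \x^\a$ for a single common $\a$. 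The product $\prod_j g_j$ then lies in $\bigcap_j I_j$ and has initial form $\x^{l\a}$, a monomial, so $\w \notin \T_{\rm loc}(\bigcap_j I_j)$. The one real obstacle here — and the step I expect to take the most care — is that an element of $\initial_\w(I_j)$ need not itself be $\initial_\w(f_j)$ for a single $f_j$; it is only a sum of such initial forms. So I must argue that if the $\w$-homogeneous ideal $\initial_\w(I_j)$ contains the monomial $\x^{\a_j}$, then in fact $\x^{\a_j}$ (or a monomial multiple of it) arises as $\initial_\w(f_j)$ for some genuine $f_j \in I_j$; this uses $\w$-homogeneity of $\initial_\w(I_j)$ together with the fact that one can subtract off leading forms and clear lower-order contributions, exactly as in the proof that $\initial_\w(I)$ is generated by initial forms of elements of $I$. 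Once that normalization is in hand, the product argument finishes the claim. (The hypothesis mentioning $V, W \subset \M^{\oplus r}$ appears to be vestigial — $V$ and $W$ play no role in the three stated assertions — so I would simply not use it.)
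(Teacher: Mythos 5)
Your proposal is correct and follows essentially the same route as the paper: monotonicity of $\initial_\w$ for (1), the identity $\initial_\w(g)^n=\initial_\w(g^n)$ applied to $g\in\sqrt{I}$ for (2), and multiplying elements of the $I_i$ with monomial initial forms to produce a monomial in $\initial_\w(\bigcap_i I_i)$ for (3). The extra normalization you anticipate in (3) is not needed: the remark after Definition \ref{ltv} (which you already cite) lets you choose, for each $i$, some $f_i\in I_i$ whose initial form is a monomial, and then $\initial_\w(\prod_i f_i)=\prod_i\initial_\w(f_i)$ is already the required monomial, exactly as in the paper's proof.
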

\begin{proof}
(1) is trivial. 

(2): Since $I\subset\sqrt{I}$, $\sqrt{\initial_\w(I)}\subset\sqrt{\initial_\w(\sqrt{I})}$. 
To prove the converse, let $f\in {\initial_\w(\sqrt{I})}$. 
Then there exists $g\in\sqrt{I}$ such that $\initial_\w(g)=f$. 
Since $g^{n}\in I$ for some $n\in \N$, $f^{n}=\initial_\w(g^{n})\in\initial_\w(I)$. 
Thus $f\in \sqrt{\initial_\w(I)}$. 
This implies ${\initial_\w(\sqrt{I})}\subset \sqrt{\initial_\w(I)}$ and thus $\sqrt{\initial_\w(\sqrt{I})}\subset \sqrt{\initial_\w(I)}$. 

(3): Since $\bigcap I_i\subset I_i$ for all $i$, 
we have $\T_{\rm loc}(\bigcap I_i)\supset\bigcup_{i}\T_{\rm loc}(I_i)$. 
Let $\w\not\in \bigcup_{i}\T_{\rm loc}(I_i)$. 
For each $i$, there exists $f_i\in I_i$ such that $\initial_\w(f_i)$ is a monomial. 
Since $\prod_{i}f_i\in\prod I_i\subset\bigcap I_i$, we conclude that 
$\initial_\w(\bigcap I_i)$ contains a monomial 
$\initial_\w(\prod_{i}f_i)=\prod_{i}\initial_\w(f_i)$. 
\end{proof}
By Lemma \ref{intersection of ideals}, 
$\T_{\rm loc}(I)=\bigcup_{\P \in \Min I} \T_{\rm loc}(\P)$ where $\Min I$ is the set of the minimal associated prime ideals of $I$. 
\subsection{Fundamental theorem of local tropical geometry}
We will prove an analogue of the the fundamental theorem of tropical geometry 
(\cite{SS} Theorem 2.1, \cite{Draisma} Theorem 4.2, \cite{JMM} Theorem 2.13). 
We use the theory of affinoid algebras similarly to the proof of the fundamental theorem of tropical geometry in \cite{Draisma}. 

We set 
\[
\k=K((t^\R))=\Bigl\{ \sum_{a\in \R}c_at^a~~\Big|~~ c_a\in K \mbox{ and } \{a : c_a\neq 0\} \mbox{ is well-ordered}\Bigr\}. 
\]
Then $\k$ is a field in the natural way. 
We define the valuation $\ord_t$ on $\k$ by $\ord(\sum_{a\in \R}c_at^a)=\min\{a\mid c_a\neq 0\}$. 
Fix $e\in \R_+$. For $\xi\in \k$, we define $\abs{\xi}:=e^{-\ord_t(\xi)}$, and call it the {\em non-archimedean absolute value} of $\xi$. 
Then $\k$ is complete with respect to the metric induced by $\abs{~}$. 
We denote by  ${\k^\circ }:=\{\xi \in \k\mid \ord_t(\xi)\ge 0\}=\{\xi \in \k\mid \abs{\xi}\le 1\}$, 
and $\k^{\circ\circ}:=\{\xi \in \k\mid \ord_t(\xi)> 0\}=\{\xi \in \k\mid \abs{\xi}< 1\}$, 
the valuation ring of $\ord_t$, and its unique maximal ideal, respectively. 
For simplicity of notation, we write $\M=\k^{\circ\circ}$ in this paper. 
Then the residue field  and ${\k^\circ}/\M$ is isomorphic to $K$. 
Since $K$ is algebraically closed and $\abs{\k^\times}:=\{\abs{\xi}\mid \xi\in \k^\times\}=e^\R=\R_+$, 
$\k$ is also algebraically closed (see \cite{FM} Section 1.1. The field $\k$ coincides with the field $k((\Gamma))$ in loc. cit., Example 1.1.3 with $\Gamma=e^{\R}$). 

For $\bsxi=(\xi_1,\dots,\xi_r)\in \M^{\oplus r}$, 
$K\llbracket \bsxi \rrbracket \subset \k^\circ$ is well-defined, 
and the substitution $f(\bsxi)\in \k^\circ$ for $f(\x)\in {K}\llbracket \x\rrbracket$ does make sense. 
Hence we can define the following. 
\begin{Definition}\label{def of VM}
Let $I\subset K\llbracket\x\rrbracket$ be a subset (e.g. an ideal of $K\llbracket\x\rrbracket$, or $k\llbracket\x\rrbracket$). We set 
\[
V_\M(I)=\{\bsxi=(\xi_1,\dots,\xi_r)\in \M^{\oplus r} \mid f(\bsxi)=0 \mbox{ for all } f\in I\}. 
\]
\end{Definition}
We denote the {\em Tate algebra} by 
\[
T_r:=\k\langle \x\rangle=\biggl\{ \sum_{\a\in \N^r}c_\a \x^\a \in \k\llbracket\x\rrbracket ~~\Big|~~ c_\a\in \k, \abs{c_\a}\to 0 \mbox{ as } \abs{\a}\to \infty \biggr\}
\]
where $\abs{\a}=a_1+\dots+a_r$ for $\a=(a_1,\dots,a_r)$. 
We extend $\ord_t$ to a function on $T_r$ by setting $\ord_t(\sum_{\a\in \N^r}c_\a \x^\a)=\min\{\ord_t(c_\a)\mid c_\a\neq 0\}$. 
We set 
$T_r^\circ:=\{f\in T_r\mid \ord_t(f)\ge 0\}
=\{ \sum_{\a\in \N^r}c_\a \x^\a \mid c_\a\in {\k^\circ}, \abs{c_\a}\to 0 \mbox{ as } \abs{\a}\to \infty \}$, 
and $T_r^{\circ\circ}:=\{f\in T_r\mid \ord_t(f)> 0\}=\M T_r^\circ$. 
Then $\overline{T}_r:=T_r^\circ/T_r^{\circ\circ}$ is canonically isomorphic to $K[\x]$. 

The next theorem is an analogue of the the fundamental theorem of tropical geometry. 
\begin{Theorem}\label{FTLTV}
Let $I\subset {K}\llbracket\x\rrbracket$ be an ideal. Then 
\[
\T_{\rm loc}(I)=\ord_t(V_{\M}(I)). 
\]
Moreover, for $(w_1,\dots,w_r)\in \T_{\rm loc}(I)\cap \R_+^r$ and $(\alpha_1,\dots,\alpha_r) \in V_K(\initial_\w(I))$, 
there exists $(\xi_1,\dots,\xi_r)\in V_\M(I)$ such that $\initial_t(\xi_i)=\alpha_it^{w_i}$ if $\alpha_i\neq 0$, 
and $\ord_t(\xi_i)>w_i$ otherwise.  
\end{Theorem}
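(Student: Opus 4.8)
The plan is to follow the classical strategy for the fundamental theorem of tropical geometry via Tate algebras, as in \cite{Draisma}, adapted to the local setting where we only allow substitutions from the maximal ideal $\M$. First I would prove the easy inclusion $\T_{\rm loc}(I)\supset \ord_t(V_\M(I))$. Given $\bsxi\in V_\M(I)$ with $\ord_t(\bsxi)=\w$, suppose for contradiction that $\initial_\w(I)$ contains a monomial, hence $\initial_\w(f)=c\x^\a$ for some $f\in I$ and $c\neq 0$. Since $f(\bsxi)=0$, comparing $t$-orders one gets that the term $c\bsxi^\a$ of $t$-order $\ord_\w(f)=\w\cdot\a$ must cancel against strictly higher-order contributions — but all other monomials $\x^\b$ appearing in $f$ have $\w\cdot\b>\w\cdot\a$, so $\ord_t(f(\bsxi))=\w\cdot\a<\infty$, contradicting $f(\bsxi)=0$. (A minor point: if some $w_i=\infty$ one sets $\xi_i$ to have $\ord_t(\xi_i)$ arbitrarily large, or $\xi_i=0$; the monomial-order bookkeeping still goes through because a variable with weight $\infty$ cannot appear in $\initial_\w(f)$.)

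For the reverse inclusion together with the refined lifting statement, fix $\w=(w_1,\dots,w_r)\in \T_{\rm loc}(I)\cap\R_+^r$ and a point $\bsalpha=(\alpha_1,\dots,\alpha_r)\in V_K(\initial_\w(I))$; such a point exists because $\initial_\w(I)$ contains no monomial, so $\initial_\w(I)\neq K[x_i\mid w_i\neq\infty]$ and its zero locus over the algebraically closed $K$ is nonempty (here one may also need $\initial_\w(I)$ to not contain a power of a monomial, but Lemma \ref{intersection of ideals}(2) shows $\T_{\rm loc}(I)=\T_{\rm loc}(\sqrt I)$, and over a radical ideal ``contains no monomial'' already forbids monomials in the radical). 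The idea is to rescale coordinates by $x_i\mapsto t^{w_i}x_i$: set $\phi_\w(f)=t^{-\ord_\w(f)}f(t^{w_1}x_1,\dots,t^{w_r}x_r)$. For $f\in I$ of finite $\w$-order this lands in $T_r^\circ$ and its reduction modulo $T_r^{\circ\circ}$ is exactly $\initial_\w(f)\in K[\x]=\overline T_r$. Let $J\subset T_r$ be the ideal generated by $\{\phi_\w(f)\mid f\in I,\ \ord_\w(f)<\infty\}$; then the reduction of $J\cap T_r^\circ$ contains $\initial_\w(I)$. Since $\bsalpha$ is a zero of $\initial_\w(I)$, I want to lift it to a point of $V(J)\cap (\k^\circ)^{\oplus r}$ whose reduction is $\bsalpha$, and then undo the rescaling: the point $\xi_i=t^{w_i}\cdot(\text{lift of }\alpha_i)$ will lie in $\M^{\oplus r}$, satisfy $f(\bsxi)=0$ for all $f\in I$, and have $\initial_t(\xi_i)=\alpha_i t^{w_i}$ when $\alpha_i\neq 0$ and $\ord_t(\xi_i)>w_i$ otherwise.

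The existence of such a lift is the heart of the matter and the main obstacle. The standard approach is: the reduction of the affinoid variety $V(J)$ is a closed subscheme of $\A^r_K$ whose ideal contains $\initial_\w(I)$ but could a priori be strictly larger — so one must argue that $\bsalpha$, or at least some $K$-point in the closure of the orbit/some nearby point, still lifts. The clean way is to pass to a maximal ideal: the point $\bsalpha$ defines a maximal ideal $\m_{\bsalpha}\subset K[\x]=\overline T_r$ containing the reduction of $J$; pull it back to a maximal ideal of $T_r^\circ$, then use that $\k$ is algebraically closed together with the fact that closed points of $\mathrm{Max}(T_r)$ have coordinates in $\k$ (no finite extension needed, as $\k=\bar\k$) to obtain a $\k$-point of $V(J)$. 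One then checks this point lies in $(\k^\circ)^{\oplus r}$ — equivalently in the Berkovich/affinoid unit polydisc — and reduces to $\bsalpha$. Here one must be careful that $J$ is proper (so that $V(J)\neq\emptyset$): properness of $J$ in $T_r$ follows from $1\notin J$, which in turn follows from the reduction of $J\cap T_r^\circ$ being contained in the proper ideal $\sqrt{\initial_\w(I)}$ (again invoking Lemma \ref{intersection of ideals}(2) and the no-monomial hypothesis, which guarantees $\initial_\w(I)$ is contained in no power of the irrelevant ideal). The delicate steps are thus (i) controlling the reduction of the Tate ideal $J$ so that it does not acquire extra elements forcing $1\in J$, and (ii) ensuring the lifted point has all coordinates of non-negative $t$-order; both are handled by the affinoid-reduction dictionary ($T_r^\circ/T_r^{\circ\circ}\cong K[\x]$) exactly as in \cite{Draisma}, with the only genuinely new bookkeeping being the weight-$\infty$ coordinates, for which one simply takes $\xi_i$ with $\ord_t(\xi_i)$ larger than any $t$-order occurring in the finitely many generators under consideration — an elementary limiting argument since $I$ is finitely generated.
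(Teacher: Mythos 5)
Your overall strategy is the same as the paper's: the easy inclusion by comparing lowest $t$-order terms of $f(\bsxi)$, and the hard inclusion by rescaling $x_i\mapsto t^{w_i}x_i$ into the Tate algebra $T_r$, reducing modulo $T_r^{\circ\circ}$, and lifting a $K$-point of the reduction to a $\k$-point of the affinoid variety using that $\k$ is algebraically closed (the paper cites Fresnel--van der Put, Theorem 3.5.3(ii), i.e.\ surjectivity of the reduction map, just as you indicate via \cite{Draisma}).

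There is, however, one step where your proposal falls short of what the theorem asserts. You allow the reduction of $J\cap T_r^\circ$ to be strictly larger than $\initial_\w(I)$ and suggest lifting ``$\boldsymbol{\alpha}$, or at least some $K$-point \dots/some nearby point.'' Lifting a nearby point would still yield the set equality $\T_{\rm loc}(I)=\ord_t(V_\M(I))$ (any point of $V_K(\overline{J})\cap (K^\times)^r$ realizes $\w$), but it does not prove the ``Moreover'' clause, which requires lifting the \emph{given} $(\alpha_1,\dots,\alpha_r)$ with prescribed initial terms $\alpha_it^{w_i}$. The paper closes precisely this by the equality $\overline{J}=\initial_\w(I)$ (image of $J\cap T_r^\circ$ in $\overline{T}_r=K[\x]$), then passes to $J_0=\sqrt{J}$, whose reduction has the same radical, so that $\theta_{\boldsymbol{\alpha}}$ is an honest $K$-point of the reduction of $T_r/J_0$ and lifts; to complete your argument you must prove (or cite) that equality rather than only the containment $\overline{J}\supset\initial_\w(I)$ you note. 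A second, more minor, point: your treatment of weight-$\infty$ coordinates in the lifting direction (choosing $\ord_t(\xi_i)$ ``larger than any $t$-order occurring in the generators'') does not give $\ord_t(\xi_i)=\infty$, so the order vector of the constructed point is not $\w$; the correct reduction, as in the paper, is the bijective correspondence with $I+\langle x_i\rangle/\langle x_i\rangle$ (setting $\xi_i=0$ for those coordinates), after which one may assume $\w\in\R_+^r$ throughout.
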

\begin{proof}
There is a one-to-one correspondence between $\T_{\rm loc}(I+\langle x_i\rangle/\langle x_i\rangle)\subset \pR^{r-1}$ 
(resp. $V_\M(I+\langle x_i\rangle/\langle x_i\rangle)$) 
and the subset of $\T_{\rm loc}(I)$ (resp. $V_\M(I)$) consists of elements whose $i$-th component is $\infty$ (resp. 0). 
Hence it is enough to show that $\T_{\rm loc}(I)\cap \R_+^r=\ord_t(V_{\M}(I))\cap \R_+^r$. 

First, we prove that $\T_{\rm loc}(I)\supset \ord_t(V_{\M}(I))$. 
Let $\bsxi=(\xi_1,\dots,\xi_r)\in V_\M(I)$ with $\xi_i\neq 0$ for all $i$,  and set $\w=\ord(\bsxi)$. 
Let $f\in I$, and $f_0=\initial_\w(f)\in K[\x]$. 
Since the lowest order terms appearing in the expansion of $f(\bsxi)$ is $f_0(\initial_t(\bsxi))$ which should be also zero. 
As  $\initial_t(\xi_i)$'s are monomials, $f_0(\initial_t(\bsxi))=0$ is possible only if $f_0$ is not a monomial. 
Thus $\w\in \T_{\rm loc}(I)$. 

To prove the converse inclusion, 
take $\w=(w_1,\dots,w_r)\in \T_{\rm loc}(I)\cap \R_+^r$. 
Then we can define a ring homomorphism $\psi: {K}\llbracket \x \rrbracket \to T_r$, $x_i\mapsto t^{w_i}x_i$. 
Let $J$ be the ideal of $T_r$ generated by $\psi(I)$, and set $J_0=\sqrt{J}$ and $A=T_r/J_0$. 
We denote by $\overline{J}$ and $\overline{J_0}$ the image of $J\cap T_r^\circ$ and $J_0\cap T_r^\circ$ in $\overline{T}_r=K[\x]$, 
respectively. 
Then $\overline{J}=\initial_\w(I)$, and 
$\overline{J}$ and $\overline{J_0}$ have the same radical. 
Let $\boldsymbol{\alpha}=(\alpha_1,\dots,\alpha_r) \in V_K(\initial_\w(I))=V_K(\initial_\w(\overline{J_0}))$, 
and $\theta_{\boldsymbol{\alpha}}: \overline{T}_r/\overline{J_0} \to K$ the corresponding homomorphism. 
Since $K$ and $\k$ are algebraically closed, $\theta_{\boldsymbol{\alpha}}$ lifts to $\widetilde{\theta_{\boldsymbol{\alpha}}}: A \to \k$
(see \cite{FM} Theorem 3.5.3 (ii) and its proof. See also loc. cit., Corollary 3.5.7). 
By the definition of the lift, $\bsxi=(\xi_1,\dots,\xi_r)$ where $\xi_i=\widetilde{\theta_{\boldsymbol{\alpha}}}(\psi(x_i))$ satisfies the desired conditions. 
In particular, if $\boldsymbol{\alpha} \in V_{K}(\initial_\w(\overline{J_0}))\cap (K^\times)^r$, which exists as $\initial_\w(I)$ contains monomial, 
then $\w=\ord_t(\bsxi)\in \ord_t(V_\M(I))$. 
\end{proof}
\section{Irreducibility criterion for algebroid curves}
In this section, we  give an irreducibility criterion for algebroid curves over the perfect field $k$ in terms of local tropical varieties. 
\begin{Definition}\label{def of wI}
Let $I\subset k\llbracket\x\rrbracket$ be an unmixed ideal of dimensional one, and $f\in k\llbracket\x\rrbracket$. 
We define the {\it intersection number} of $f$ and $I$ as 
\[
\int(f;I)
=\ell_{k\llbracket \x\rrbracket}(k\llbracket \x\rrbracket/(I+\langle f\rangle))
=\dim_k k\llbracket\x\rrbracket/(I+\langle f \rangle)\in \pN\cup \{0\}, 
\]
where $\ell_R(M)$ denotes the length of an $R$-module $M$. 
We set 
\[
\bw(I)=(\int(x_1;I),\dots,\int(x_r;I))\in \pN^r. 
\]
\end{Definition}
As $A=k\llbracket\x\rrbracket/I$ is Cohen-Macaulay, $\int(f;I)$ coincides with the multiplicity $e(f;A)$. 
\begin{Lemma}\label{rmk on wI}
Let $I\subset k\llbracket \x\rrbracket$ be an unmixed ideal of dimension one, and set $A=k\llbracket \x\rrbracket/I$. 
Then $\int(f;I)=\sum_{\p \in \Ass I} \ell(A_{\p})\cdot \int(f;\p)$ where $\Ass I$ is the set of the associated primes of $I$. 
In particular, 
\[
\bw(I)=\sum_{\p \in \Ass I} \ell(A_{\p})\cdot \bw(\p). 
\]
\end{Lemma}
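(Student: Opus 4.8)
The plan is to reduce the statement to the standard additivity of multiplicities (or lengths) along a filtration, and then use the Cohen--Macaulay property to identify $\int(f;I)$ with a multiplicity. First I would recall, as remarked just before the statement, that $A = k\llbracket\x\rrbracket/I$ is Cohen--Macaulay of dimension one, so for $f$ with $\dim A/(f) = 0$ the intersection number $\int(f;I) = \ell_A(A/(f)A)$ equals the Hilbert--Samuel multiplicity $e(f;A)$; in fact for a one-dimensional Cohen--Macaulay ring $e((f);A) = \ell_A(A/fA)$ exactly when $f$ is a parameter, which is the relevant case (when $f$ is a zerodivisor or a unit both sides are handled separately: if $f$ is a unit both sides are $0$, and if $f$ lies in some minimal prime then $\int(f;I)=\infty$, matching the right-hand side since then $\int(f;\p)=\infty$ for that $\p$ while $\ell(A_\p)\neq 0$). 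So the essential case is $f$ a parameter.

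Next I would set up the additivity. Since $A$ is one-dimensional and unmixed, $\Ass I = \Min I$, and for each $\p \in \Ass I$ the local ring $A_\p$ is a zero-dimensional (Artinian) local ring, so $\ell_{A_\p}(A_\p)$ is finite. The key tool is the additivity formula for multiplicities (e.g. \cite{Matsumura} or Bruns--Herzog): for a finitely generated module $M$ over a Noetherian local ring and a parameter $f$,
\[
e(f; M) = \sum_{\p} \ell_{A_\p}(M_\p)\, e(f; A/\p),
\]
where the sum runs over primes $\p$ with $\dim A/\p = \dim M$. Applying this with $M = A$ gives $e(f;A) = \sum_{\p \in \Ass I} \ell(A_\p)\, e(f; A/\p)$. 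Finally I would identify $e(f; A/\p) = \int(f;\p)$: since $A/\p = k\llbracket\x\rrbracket/\p$ is a one-dimensional Cohen--Macaulay (indeed a domain), the same parameter-multiplicity identity gives $e(f;A/\p) = \ell(A/(\p + \langle f\rangle)) = \int(f;\p)$. Combining, $\int(f;I) = \sum_{\p\in\Ass I} \ell(A_\p)\,\int(f;\p)$. Taking $f = x_i$ for each $i$ and assembling the coordinates yields $\bw(I) = \sum_{\p\in\Ass I}\ell(A_\p)\,\bw(\p)$.

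The main obstacle, and the only point needing genuine care, is bookkeeping around the infinite case and degenerate choices of $f$: one must check that $\int(x_i;I) = \infty$ precisely when $x_i$ lies in some $\p \in \Ass I$, and that this is compatible with the convention on $\pN$-valued sums (a single infinite summand with nonzero coefficient forces the whole sum to be $\infty$). This is a matter of noting that $x_i \in \p$ iff $\dim A/(x_i) \ge 1$ iff $\ell(A/(x_i)A) = \infty$, together with the fact that each $\ell(A_\p)$ is a finite positive integer. Once this is dispatched, the argument is a direct invocation of the additivity of multiplicities over a Cohen--Macaulay ring of dimension one, so I would keep the exposition brief and cite the standard reference for the additivity formula rather than reproving it.
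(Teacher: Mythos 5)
Your proposal is correct and follows essentially the same route as the paper: identify $\int(f;I)$ with the multiplicity $e(f;A)$ via the Cohen--Macaulay property and then invoke the standard additivity (associativity) formula $e(f;A)=\sum_{\p}\ell(A_\p)\,e(f;A/\p)$, identifying $e(f;A/\p)=\int(f;\p)$. The only difference is that you spell out the degenerate and infinite cases, which the paper leaves implicit.
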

\begin{proof}
We conclude the assertion form the multiplicity formula $e(f;A)=\sum_{\p \in \Ass I} \ell(A_{\p})\cdot e(f;A_\p)$ 
(e.g. see \cite{Nagata} Corollary 23.4 or \cite{Fulton} Lemma A.2.7). 
\end{proof}
\begin{Lemma}\label{l=ord}
Let $A$ be an irreducible algebroid curve over $k$. 
Let $B=k'\llbracket s\rrbracket$ be the integral closure of $A$ with the coefficient field $k'$. 
For $\eta\in A$, $\dim_k A/\eta A=[k':k]\cdot\ord_s(\eta)$. 
\end{Lemma}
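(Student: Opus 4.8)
The plan is to reduce the equality $\dim_k A/\eta A=[k':k]\cdot\ord_s(\eta)$ to a length computation inside the discrete valuation ring $B=k'\llbracket s\rrbracket$, using the fact that $B$ is a finite $A$-module (being the integral closure of a one-dimensional complete local domain that is excellent, since $k$ is perfect). First I would note that $A$ and $B$ have the same total quotient field $Q(A)=Q(B)$, so for $\eta\in A\setminus\{0\}$ multiplication by $\eta$ is injective on both $A$ and $B$, and I want to compare $\ell_A(A/\eta A)$ with $\ell_A(B/\eta B)$. The key observation is that the two differ by a correction term that does not depend on $\eta$: from the exact sequences $0\to A\to B\to B/A\to 0$ and the snake lemma applied to multiplication by $\eta$, one gets
\[
\ell_A(A/\eta A)-\ell_A(B/\eta B)=\ell_A((B/A)[\eta]) - \ell_A((B/A)/\eta(B/A)),
\]
and since $B/A$ has finite length over $A$ (the conductor is nonzero), both terms on the right are equal for $\eta\neq 0$, so the difference is $0$; that is, $\ell_A(A/\eta A)=\ell_A(B/\eta B)$. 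One could instead argue directly: $\ell_A$ is additive on finite-length modules and $\eta\cdot$ gives an isomorphism $B/A\xrightarrow{\sim}\eta B/\eta A$, whence $\ell_A(A/\eta A)=\ell_A(B/\eta B)$ by chasing the diagram of inclusions $\eta A\subset \eta B\subset B$ and $\eta A\subset A\subset B$ (both sandwiching lengths that cancel).

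Next I would compute $\ell_A(B/\eta B)$. Writing $\ord_s(\eta)=n$, we have $\eta B=s^nB$, so $B/\eta B\cong B/s^nB$, which has a filtration $B\supset sB\supset\cdots\supset s^nB$ with each successive quotient $s^iB/s^{i+1}B\cong B/sB\cong k'$. Hence $\dim_k B/\eta B=n\cdot\dim_k k'=\ord_s(\eta)\cdot[k':k]$. Combining with the previous step, $\dim_k A/\eta A=\ell_k(A/\eta A)=\ell_k(B/\eta B)=[k':k]\cdot\ord_s(\eta)$, where I use that $\ell_k$ agrees with $\dim_k$ since $k$ is a coefficient field, and that $\ell_A$ and $\ell_k$ of a finite-length $A$-module are related by the single factor $\ell_k(A/\m_A)$ — but actually it is cleaner to work with $\dim_k$ throughout rather than $\ell_A$, since $\dim_k$ is manifestly additive on the finite-length pieces appearing above.

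The main obstacle is the first step: justifying cleanly that the "defect" $\dim_k A/\eta A-\dim_k B/\eta B$ vanishes. The honest way is to observe $\dim_k(B/\eta A)$ can be computed two ways — as $\dim_k(B/A)+\dim_k(A/\eta A)$ and as $\dim_k(B/\eta B)+\dim_k(\eta B/\eta A)$ — and that $\dim_k(\eta B/\eta A)=\dim_k(B/A)$ because multiplication by the nonzerodivisor $\eta$ is a $k$-linear isomorphism $B/A\to \eta B/\eta A$. This requires knowing $\dim_k(B/A)<\infty$, i.e.\ that $B$ is module-finite over $A$; that is where perfectness of $k$ (hence excellence of $A$, hence finiteness of normalization) enters, and it is the only genuinely nontrivial input. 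Everything else is bookkeeping with lengths over a DVR.
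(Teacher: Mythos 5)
Your proof is correct, and it follows the same overall reduction as the paper: compare $A$ with its normalization $B$ via the exact sequence $0\to A\to B\to B/A\to 0$, use that $B/A$ has finite length, and then compute the colength of $\eta$ in the discrete valuation ring $B=k'\llbracket s\rrbracket$, which gives $[k':k]\cdot\ord_s(\eta)$. Where you differ is in justifying the middle equality $\dim_k A/\eta A=\dim_k B/\eta B$: the paper invokes multiplicity theory, writing $\dim_k A/\eta A=e_A(\eta;A)=e_A(\eta;B)=\dim_k B/\eta B$ (using that $A$ is a one-dimensional Cohen--Macaulay local ring, so colength of a parameter equals multiplicity, and that multiplicity ignores the finite-length module $B/A$), whereas you prove the same equality by an elementary length chase -- either the snake-lemma identity $\ell(A/\eta A)-\ell(B/\eta B)=\ell((B/A)[\eta])-\ell((B/A)/\eta(B/A))=0$, or the two chains $\eta A\subset A\subset B$ and $\eta A\subset\eta B\subset B$ together with the isomorphism $B/A\cong\eta B/\eta A$. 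Your route is more self-contained, avoiding any appeal to Hilbert--Samuel multiplicities, at the cost of a few extra lines; the paper's is shorter by citing standard multiplicity facts. Two minor points: both arguments tacitly assume $\eta\neq 0$ (for $\eta=0$ both sides are infinite by the paper's conventions), and your attribution of the module-finiteness of $B$ over $A$ to perfectness of $k$ is slightly off -- $A$ is a complete local Noetherian domain, so its integral closure is finite over it irrespective of perfectness (perfectness of $k$ is used elsewhere in the paper, e.g.\ to arrange $k\subset k'$); this does not affect the validity of your argument, since the finiteness you need does hold.
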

\begin{proof}
As $0\to A\to B\to B/A\to 0$ is exact and $B/A$ has finite length as an $A$-module, 
$\dim_k A/\eta A=e_A(\eta;A)=e_A(\eta;B)=\dim_k B/\eta B=[k':k]\cdot\ord_s(\eta)$. 
\end{proof}
\begin{Example}\label{ex of ord=l}
$\dim_K K\llbracket x,y\rrbracket/\langle x^3-y^4,xy^2\rangle=\dim_K K\llbracket t^4,t^3\rrbracket/\langle t^{10}\rangle =10$. 
\end{Example}
\begin{Corollary}\label{int is additive}
Let $I\subset k\llbracket \x\rrbracket$ be an unmixed ideal of dimension one, and $f,g\in K\llbracket \x\rrbracket$. Then $\int(fg;I)=\int(f;I)+\int(g;I)$. 
In particular, $\int(\x^\a; I)=\a\cdot \bw(I)$. 
\end{Corollary}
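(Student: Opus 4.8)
The plan is to reduce the multiplicativity $\int(fg;I)=\int(f;I)+\int(g;I)$ to the irreducible case, where it follows from Lemma \ref{l=ord}, and then to globalize via the additivity formula of Lemma \ref{rmk on wI}. Concretely, write $A=k\llbracket\x\rrbracket/I$ and $\Ass I=\{\p_1,\dots,\p_s\}$. For each $j$, let $B_j=k_j'\llbracket s_j\rrbracket$ denote the normalization of $A_j:=k\llbracket\x\rrbracket/\p_j$, and let $\nu_j:=\ord_{s_j}\circ(A_j\hookrightarrow B_j)$ be the associated (normalized) discrete valuation on the fraction field of $A_j$. By Lemma \ref{l=ord}, $\int(h;\p_j)=\dim_k A_j/hA_j=[k_j':k]\cdot \nu_j(h)$ for every $h$ whose image in $A_j$ is nonzero; and $\nu_j$ is a valuation, hence additive on products. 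Therefore $\int(\cdot;\p_j)$ is additive on any pair $f,g$ not lying in $\p_j$.

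Next I would combine these. By Lemma \ref{rmk on wI}, $\int(h;I)=\sum_{j=1}^s \ell(A_{\p_j})\cdot\int(h;\p_j)$, and the same formula holds with $h$ replaced by $f$, $g$, and $fg$. Since each $\int(\cdot;\p_j)$ is additive on $\{f,g\}$ (as just shown), summing the term-by-term identity $\int(fg;\p_j)=\int(f;\p_j)+\int(g;\p_j)$ over $j$ with the weights $\ell(A_{\p_j})$ yields $\int(fg;I)=\int(f;I)+\int(g;I)$ immediately. The statement $\int(\x^\a;I)=\a\cdot\bw(I)$ is then the special case $f=\x^\a$: writing $\x^\a=\prod_i x_i^{a_i}$ and applying the additive identity repeatedly gives $\int(\x^\a;I)=\sum_i a_i\int(x_i;I)=\a\cdot\bw(I)$ by the definition of $\bw(I)$ in Definition \ref{def of wI}.

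The one point requiring care is the treatment of zero divisors: if $f$ (or $g$) lies in some minimal prime $\p_j$, then $\int(f;\p_j)=\dim_k A_j/fA_j=\infty$, and one must check the identity still holds in $\pN\cup\{0\}$ with the convention $\infty+c=\infty$. If $f\in\p_j$ then $fg\in\p_j$ as well, so both sides of $\int(fg;\p_j)=\int(f;\p_j)+\int(g;\p_j)$ are $\infty$; and once a single summand on either side of the global formula is $\infty$ (with positive weight $\ell(A_{\p_j})$, which is finite and nonzero since $\p_j$ is minimal), both $\int(fg;I)$ and $\int(f;I)+\int(g;I)$ are $\infty$. So the identity persists. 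I expect this bookkeeping with infinite values — together with a brief justification that Lemma \ref{l=ord} and Lemma \ref{rmk on wI} indeed extend to coefficients in the larger field $K$ as asserted in the statement, which follows by base change since $k$ is perfect and lengths/multiplicities are preserved — to be the only mild obstacle; the algebraic core is just "a valuation is additive."
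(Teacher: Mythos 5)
Your argument is correct and is exactly the route the paper intends: the Corollary is stated without proof as an immediate consequence of Lemma \ref{rmk on wI} (decomposition of $\int(\cdot;I)$ over the associated primes, all minimal since $I$ is unmixed) and Lemma \ref{l=ord} (identification of $\int(\cdot;\p)$ with $[k':k]$ times the normalized valuation, hence additive on products), with the same bookkeeping for the infinite/zero-divisor case. Nothing further is needed.
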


In the tropical algebraic geometry, 
a tropical variety defined by a one-dimensional prime ideal with constant coefficient is a finite union of rays 
by Bieri--Groves Theorem (\cite{BG} Theorem A, \cite{Sturmfels2} Theorem 9.6). 
We will prove that a more strong result holds for local tropical varieties; 
the local tropical variety defined by a one-dimensional prime ideal consists of a single ray.
\begin{Theorem}\label{T(p)=w(p)}
Let $\p\subset k\llbracket\x\rrbracket$ be a one-dimensional prime ideal. 
Then 
\[
\T_{\rm loc}(\p)=\ord_t(V_\M(\p))=\pR\cdot \bw(\p). 
\]
\end{Theorem}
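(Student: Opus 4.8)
The plan is to use the fundamental theorem of local tropical geometry (Theorem \ref{FTLTV}) to reduce the statement to understanding the valuations of points in $V_\M(\p)$, and then exploit that $A = k\llbracket\x\rrbracket/\p$ is a one-dimensional domain whose normalization is a power series ring in one variable $B = k'\llbracket s\rrbracket$. The equality $\T_{\rm loc}(\p) = \ord_t(V_\M(\p))$ is already Theorem \ref{FTLTV}, so the real content is the identification of this set with the single ray $\pR \cdot \bw(\p)$.

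First I would handle the inclusion $\pR \cdot \bw(\p) \subseteq \T_{\rm loc}(\p)$. The vector $\bw(\p)$ itself should lie in $\T_{\rm loc}(\p)$: writing $\bw(\p) = (\int(x_1;\p),\dots,\int(x_r;\p))$ and using Lemma \ref{l=ord} to identify $\int(x_i;\p) = [k':k]\cdot \ord_s(x_i)$ (after possibly passing to $k = K$ algebraically closed, so $[k':k]=1$ — or keeping the factor throughout), I get that for the parametrization $x_i = \varphi_i(s) \in k'\llbracket s\rrbracket$ the orders are proportional to $\bw(\p)$. Scaling $s$ appropriately, or rather reading off the point $\bsxi \in V_\M(\p)$ obtained by substituting $s = t^c$ for suitable $c$, one sees $\ord_t(\bsxi) = c\cdot\bw(\p)$, and letting $c$ range over $\R_+$ (and including the degenerate case $s \mapsto 0$ for the $\infty$ entries) gives the whole ray. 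Since $\ord_t(V_\M(\p)) = \T_{\rm loc}(\p)$, this yields $\pR\cdot\bw(\p) \subseteq \T_{\rm loc}(\p)$.

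The harder inclusion is $\T_{\rm loc}(\p) \subseteq \pR \cdot \bw(\p)$, i.e. every point of $V_\M(\p)$ has valuation vector proportional to $\bw(\p)$. Given $\bsxi = (\xi_1,\dots,\xi_r) \in V_\M(\p)$ with $\bsxi \neq 0$, the substitution map $f \mapsto f(\bsxi)$ defines a $k\llbracket\x\rrbracket$-algebra homomorphism $A \to \k$ that is injective (its kernel is a prime of $A$ of dimension $<1$, hence zero since $A$ is a one-dimensional domain, provided $\bsxi$ is not the origin and not annihilated — one must check the kernel is really $\p$ and not larger). Composing with the normalization $A \hookrightarrow B = k'\llbracket s\rrbracket$ and then extending to the fraction field, the valuation $\ord_t \circ (\text{this map})$ restricted to $B$ is, up to a positive scalar $\lambda$, the $s$-adic valuation $\ord_s$ — this is because $B$ is a DVR and any valuation on $\mathrm{Frac}(B)$ that is non-negative on $B$ and positive on $\m_B$ must be an integer multiple of $\ord_s$, and non-negativity on $\m_B = \k^{\circ\circ}\cap B$ follows from $\bsxi \in \M^{\oplus r}$. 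Therefore $\ord_t(\xi_i) = \ord_t(x_i(\bsxi)) = \lambda\cdot\ord_s(x_i) $, and since $\bw(\p)$ is (a scalar multiple of) $(\ord_s(x_1),\dots,\ord_s(x_r))$ by Lemma \ref{l=ord}, we get $\ord_t(\bsxi) \in \pR\cdot\bw(\p)$.

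The step I expect to be the main obstacle is the careful bookkeeping around $\infty$-entries and the degenerate points of $V_\M(\p)$: when some $\xi_i = 0$ one is really working in $V_\M(\p + \langle x_i\rangle / \langle x_i\rangle)$, and one must confirm that the index set $\{i : \xi_i = 0\}$ matches $\{i : \int(x_i;\p) = \infty\}$, i.e. exactly the variables $x_i$ lying in $\p$. This requires knowing that if $x_i \notin \p$ then $\ord_s(x_i) < \infty$ (clear, since the image of $x_i$ in $B$ is a nonzero element of $k'\llbracket s\rrbracket$), and conversely that a point $\bsxi$ with $\xi_i = 0$ forces $x_i \in \p$ — which again uses injectivity of $A \to \k$. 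Once the origin and these coordinate-hyperplane degenerations are correctly accounted for, the proof assembles cleanly from Theorem \ref{FTLTV} and Lemma \ref{l=ord}.
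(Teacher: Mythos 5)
Your proposal is correct and takes essentially the same route as the paper: Theorem \ref{FTLTV} gives the first equality, the normalization parametrization plus Lemma \ref{l=ord} gives $\pR\cdot\bw(\p)\subseteq\ord_t(V_\M(\p))$, and the converse rests on the two facts you identify, namely that the substitution map at a nonzero point of $V_\M(\p)$ has kernel exactly $\p$ and that $\ord_t$ restricted to $B=k'\llbracket s\rrbracket$ is a positive multiple of $\ord_s$. Your DVR-domination phrasing of the latter is just an abstract form of the paper's direct computation; in either version the non-negativity of $\ord_t$ on $B$ comes from the paper's one-line remark that $\k^\circ$ is integrally closed, so $B\subset\k^\circ$.
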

\begin{proof}
As $\T_{\rm loc}(\p)=\T_{\rm loc}(\p K\llbracket\x\rrbracket)$, $\T_{\rm loc}(\p)=\ord_t(V_\M(\p))$ is satisfied by Theorem \ref{FTLTV}. 

We will prove that $\bw(\p)\in \ord_t(V_\M(\p))$. 
Let $A=k\llbracket\x\rrbracket/\p$, and $B$ the integral closure of $A$. 
Then $B={k'}\llbracket s\rrbracket$ where $k'\subset K$ is the coefficient field of $B$. 
Since $k$ is perfect, we may assume that $k\subset k'$ holds under the inclusion $A\subset B$. 
Let $\xi_i(s)\in k'\llbracket s\rrbracket$ be the image of $x_i$ in $A\subset B$, and set $\bsxi=(\xi_1(t),\dots,\xi_r(t))$. 
Then $\bsxi\in V_\M(\p)$, and $\bw(\p)=[k':k]\cdot \ord_t(\bsxi)\in \ord_t(V_\M(\p))$ by Lemma \ref{l=ord}. 

To complete the proof, it is enough to show that $\ord_t(V_\M(\p))\subset \pR\cdot \bw(\p)$. 
Take $0\neq \bsxi=(\xi_1,\dots,\xi_r)\in V_\M(\p)$. 
Let $A=k\llbracket\xi_1,\dots,\xi_r\rrbracket\subset \k^\circ$ and $\phi :k\llbracket \x\rrbracket\to A$, $x_i\mapsto \xi_i$. 
As $\bsxi\in V_\M(\p)$, we have $\p \subset\Ker \phi$. 
Since $\p$ is a one-dimensional prime ideal and $A$ is a domain, $\Ker \phi$ can not be larger than $\p$. 
Thus $\Ker \phi=\p$ and $A=k\llbracket \x\rrbracket/\p$. 
In particular, $A$ is of dimension one. 
Let $B=k'\llbracket s\rrbracket$ be the integral closure of $A$. 
Then $\bw(\p)=[k':k]\cdot \ord_s(\bsxi)$ by Lemma \ref{l=ord}. 
As $\k^\circ$ is integrally closed, we may assume that $B\subset \k^\circ$. 
Since $0\neq \beta\in k'$ is invertible also in $\k^\circ$, we have $\ord_t(\xi)=\ord_t(s)\cdot \ord_s(\xi)$ for $\xi\in B$. 
Therefore $\ord_t(\bsxi)=\ord_t(s)\cdot \ord_s(\bsxi)=\ord_t(s)\cdot[k':k]^{-1}\cdot\bw(\p)\in \R_+\cdot \bw(\p)$. 
\end{proof}
Using Theorem \ref{T(p)=w(p)}, we obtain an irreducibility criterion for algebroid curves in terms of local tropical variety. 
\begin{Theorem}\label{primeness}
Let $I\subset k\llbracket\x\rrbracket$ be an unmixed ideal of dimension one. 
Then the following hold. 
\begin{enumerate}
\item 
If $\T_{\rm loc}(I)\neq \pR\cdot \bw(I)$ (e.g. $\initial_{\bw(I)}(I)$ contains monomials, or $I$ has at least two tropisms) 
then $I$ is not prime. 
\item 
If $\bw(I)$ is a tropism of $I$, then $I$ is prime. 
\item 
Let $\{\w_1,\dots,\w_l\}\subset \pN^r$ be the set of the tropisms of $I$. 
If $\bw(I)\neq(\infty,\dots,\infty)$ and $\bw(I)=\w_1+\dots+\w_l$, 
then there exist prime ideals $\p_j\subset k\llbracket\x\rrbracket$ for $1\le j\le l$ 
such that $\bw(\p_j)=\w_j$ and $I=\bigcap_{j=1}^l \p_j$. 
\end{enumerate}
\end{Theorem}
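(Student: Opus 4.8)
The plan is to reduce all three parts to the structure theorem for one-dimensional primes (Theorem~\ref{T(p)=w(p)}) combined with the length formula of Lemma~\ref{rmk on wI}, the rest being elementary bookkeeping with gcd's, lengths, and the $\infty$-coordinates. I would begin with a preliminary reduction: since $I$ is unmixed of dimension one it has no embedded primes (the only prime of dimension $<1$ is $\m$, which cannot be embedded), so $\Ass I=\Min I$, and for every $\p\in\Min I$ one has $\bw(\p)\ne(\infty,\dots,\infty)$ (else $\p=\m$). Feeding the remark following Lemma~\ref{intersection of ideals} into Theorem~\ref{T(p)=w(p)} then gives
\[
\T_{\rm loc}(I)=\bigcup_{\p\in\Min I}\pR\cdot\bw(\p),
\]
a finite union of rays; a short computation shows each ray $\pR\cdot\bw(\p)$ carries a unique primitive lattice point, namely $\w_\p:=\bw(\p)/\gcd(\bw(\p))$, so the set of tropisms of $I$ equals $\{\w_\p:\p\in\Min I\}$ (repetitions collapsed). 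Part~(1) is then immediate: if $I$ is prime the union is the single ray $\pR\cdot\bw(I)$, so $\T_{\rm loc}(I)\ne\pR\cdot\bw(I)$ forces $I$ non-prime; the two parenthetical cases are special instances, since two distinct primitive vectors cannot share a ray, and a monomial in $\initial_{\bw(I)}(I)$ exhibits $\bw(I)\in\pR\cdot\bw(I)\setminus\T_{\rm loc}(I)$.

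For (2), assume $\bw(I)$ is a tropism. Primitivity forces a finite entry, so $\bw(I)=c\,\bw(\p)$ for some $\p\in\Min I$ and some $c\in\R_+$. Now I would squeeze $c$ from two sides: Lemma~\ref{rmk on wI} gives $\bw(I)\ge\ell(A_\p)\,\bw(\p)$ coordinatewise, hence $c\ge\ell(A_\p)\ge1$; and comparing gcd's of the finite coordinates (using $\gcd(c\vv)=c\gcd(\vv)$) gives $1=\gcd(\bw(I))=c\,\gcd(\bw(\p))$, hence $c\le1$. Thus $c=1$, $\ell(A_\p)=1$, and $\bw(I)=\bw(\p)$. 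Substituting $c=1$ back into Lemma~\ref{rmk on wI} yields $\sum_{\q\ne\p}\ell(A_\q)\bw(\q)=0$; since every $\bw(\q)$ has strictly positive coordinates, there are no such $\q$, so $\Min I=\{\p\}$. Finally $\ell(A_\p)=1$ together with the absence of embedded primes forces $I=\p$: localizing $0\to\p/I\to A\to A/\p\to0$ at $\p$, the right-hand term is a length-one field, so $(\p/I)_\p=0$, and $\Ass A=\{\p\}$ then forces $\p/I=0$. Hence $I$ is prime.

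For (3) I would run the same argument with all minimal primes at once. Write $\bw(I)=\sum_{\p\in\Min I}\ell(A_\p)\gcd(\bw(\p))\,\w_\p$, and group the primes by their attached tropism; this rewrites the sum as $\bw(I)=\sum_{j=1}^l c_j\w_j$ with every $c_j\ge1$ (each group is nonempty, since every tropism comes from some minimal prime). Comparing with the hypothesis $\bw(I)=\sum_{j=1}^l\w_j$ gives $\sum_j(c_j-1)\w_j=0$; evaluating at any coordinate $i$ with $\bw(I)_i<\infty$ — one exists since $\bw(I)\ne(\infty,\dots,\infty)$, and there every $(\w_j)_i$ is a positive integer — forces $c_j=1$ for all $j$. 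As in (2), $c_j=1$ means the $j$-th group is a single prime $\p_j$ with $\ell(A_{\p_j})=1$ and $\gcd(\bw(\p_j))=1$, so $\bw(\p_j)=\w_j$; and $\Min I=\{\p_1,\dots,\p_l\}$ with all lengths one and no embedded primes gives $I=\bigcap_{j=1}^l\p_j$.

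Since the genuinely substantial input (Theorem~\ref{T(p)=w(p)}) is already available, I do not expect a real obstacle, only points needing care: the handling of the $\infty$-coordinates (pinning down the unique primitive point of each ray $\pR\cdot\bw(\p)$, and ruling out $\bw(\p)=(\infty,\dots,\infty)$ for minimal primes of a one-dimensional ideal), the elementary but essential scaling identity $\gcd(c\vv)=c\gcd(\vv)$ that forces $c=1$, and the final passage from ``$\ell(A_\p)=1$ and no embedded primes'' to ``$I$ is prime'' rather than merely to $\bw(I)=\bw(\p)$.
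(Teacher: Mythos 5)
Your proposal is correct and follows essentially the same route as the paper: both rest on Theorem~\ref{T(p)=w(p)} giving $\T_{\rm loc}(I)=\bigcup_{\p\in\Ass I}\pR\cdot\bw(\p)$ and on the length formula of Lemma~\ref{rmk on wI}, with the gcd/length squeeze forcing a single (resp.\ $l$) minimal prime(s) of multiplicity one. Your extra care in grouping primes sharing a tropism in (3) and in spelling out why $\ell(A_\p)=1$ yields $I=\p$ only makes explicit what the paper leaves terse.
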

\begin{proof}
Recall that, by Lemma \ref{rmk on wI}, $\bw(I)=\sum_{\p\in \Ass I} \ell(A_{\p})\cdot\bw(\p)$. 

(1) The assertion follows immediately from Theorem \ref{T(p)=w(p)}. 

(2) Since $\bw(I)\in \T_{\rm loc}(I)=\bigcup_{\p\in \Ass I} \R_+\cdot \bw(\p)$, 
there exist $\p_0\in \Ass I$ such that $\bw(I)=\gcd(\bw(\p_0))^{-1}\cdot\bw(\p_0)$. 
Since $\gcd(\bw(\p_0))^{-1}\le 1$ and $\ell(A_{\p_0})\ge 1$, 
the equality $\bw(I)=\gcd(\bw(\p_0))^{-1}\cdot\bw(\p_0)=\sum_{\p\in \Ass I} \ell(A_{\p})\cdot\bw(\p)$ is possible only if $\Ass I=\{\p_0\}$ and $\ell(A_{\p_0})=1$. 
This shows that $I=\p$. 

(3) 
Since $\w_j\in \T_{\rm loc}(I)=\bigcup_{\p\in \Ass I} \pR\cdot \bw(\p_j)$ and $\gcd(\w_j)=1$, 
there exists $\p_j\in \Ass I$ such that 
$\w_j=\gcd(\bw(\p_j))^{-1}\cdot\bw(\p_j)$. 
Hence the equality 
\[
\sum_{\p\in \Ass I} \ell(A_{\p})\cdot\bw(\p)=\bw(I)=\sum_{j=1}^l \gcd(\bw(\p_j))^{-1}\cdot\bw(\p_j)
\] 
is possible only if $\Ass I=\{\p_1,\dots,\p_l\}$ and $\ell(A_{\p_j})=\gcd(\bw(\p_j))=1$ for all $j$. 
Thus $I=\p_1\cap\dots\cap\p_l$, and $\w_j=\bw(\p_j)$ for all $j$. 
This proves the assertion. 
\end{proof}
We will present some examples. 
We use the well-known fact that if $\initial_\w(f_1),\dots, \initial_\w(f_n)$ is a regular sequence, 
then $\initial_\w(f_1,\dots,f_n)=\langle \initial_\w(f_1),\dots, \initial_\w(f_n)\rangle$. 
One may use Theorem \ref{primeness} for deciding irreducibility of plane algebroid curves. 
We also present examples of space algebroid curves. 
\begin{Example}[\cite{Ab} Kuo's Example]\label{Kuo's example}
Let $F(x,y)=(y^2-x^3)^2-x^7\in k\llbracket x,y\rrbracket$. 
Let $J=\langle F, z-(y^2-x^3-x^2y)\rangle \subset k\llbracket x,y,z\rrbracket$. 
Then $\bw(J)=(4,6,15)$. 
Note that $F$ is irreducible if and only if $J$ is prime.  
If $\ch(k)\neq 2$, then $G=2x^2yz+z^2+x^6y+x^4z\in J$ and $\initial_{(4,6,15)}(G)=2x^2yz$ is a monomial.
Hence $J$ is not prime, and thus $F$ is reducible. 
If $\ch(k)=2$, then $\bw(J)$ is a tropism of $J$ as $\initial_{(4,6,15)}(J)=\langle x^3-y^2,y^5-z^2\rangle$ contains no monomial. 
Hence $J$ is prime, and thus $F$ is irreducible. 
\end{Example}
\begin{Example}
Let $F(x,y)=y^2+x^3+xy\in k\llbracket x,y\rrbracket$. 
Then $\bw(F)=(2,3)$ and $\T_{\rm loc}(F)=\pR\cdot(1,1) \cup \pR\cdot(1,2)$. 
As $\bw(F)=(1,1)+(1,2)$, $F$ is factored as $F=F_1F_2$ with $\bw(F_1)=(1,1)$ and $\bw(F_2)=(1,2)$.  
In fact, if $k=\Q$, 
$F(x,y)=\bigl(y+\frac{x}{2}+\frac{x}{2}\sqrt{1-4x}~\bigr)\bigl(y+\frac{x}{2}-\frac{x}{2}\sqrt{1-4x}~\bigr)$, 
and if $k=\F_2=\Z/2\Z$, 
$F(x,y)=(y+x+\sum_{i=0}^\infty x^{2^i+1})(y+\sum_{i=0}^\infty x^{2^i+1})$. 
\end{Example}
\begin{Example}\label{ex primeness1}
Let 
$I=I_2\left(
	\begin{array}{ccc}
	x^3+y^2 & y & z \\
	z^2 & x & y 
	\end{array}
\right)=\langle (x^3+y^2)x-yz^2,y^2-xz,z^3-(x^3+y^2)y \rangle\subset k\llbracket x,y,z\rrbracket$. 
Then $\bw(I)=(5,6,7)$. 
Since $\initial_{(5,6,7)}((x^3+y^2)x-yz^2)=xy^2$ is a monomial, $\bw(I)\not\in\T_{\rm loc}(I)$, and thus $I$ is not prime. 
\end{Example}
\begin{Example}\label{ex 3}
Let 
$I=\langle x^3-y^2, (z^2-x^2y)^2-x^3y^2z\rangle \subset k\llbracket x,y,z\rrbracket$. 
Then $\bw(I)=(8,12,14)$ and $\initial_{\bw(I)}(I)=\langle x^3-y^2, (z^2-x^2y)^2\rangle$. 
Let $J=\langle I, u-(z^2-x^2y)\rangle \subset k\llbracket x,y,z,u\rrbracket$. 
Then $\bw(J)=(8,12,10,31)$, and $\gcd(\bw(J))=1$. 
As $\initial_{\bw(J)}(J)=\langle x^3-y^2, u^2-x^3y^2z,z^2-xy^2\rangle$ contains no monomial, 
$\bw(J)$ is a tropism of $J$. 
Hence $J$ is prime, and thus $I$ is also prime. 
\end{Example}
\begin{Example}\label{ex of primeness}
Let $a,b,c\in \N_+$ such that $a<b$, $a<c$, and $\gcd(a,b)=\gcd(a,c)=1$. 
Let $F_1=x^a+y^b+z^c$, $F_2=xy+yz+zx$, and $I=\langle F_1,F_2 \rangle\subset k\llbracket x,y,z\rrbracket$. 
Then $\bw(I)=(b+c,c+a,a+b)$, and 
\[
\T_{\rm loc}(I)=\pR\cdot(c,c,a)\cup\pR\cdot(b,a,b). 
\]
Since $\bw(I)=(c,c,a)+(b,a,b)$, there exist one-dimensional prime ideals $\p_1$ and $\p_2$ such that 
$\bw(\p_1)=(c,c,a)$, $\bw(\p)=(b,a,b)$, and $I=\p_1\cap\p_2$. 
\end{Example}
In the last section, we will show how to find the ideals $J$ in Example \ref{Kuo's example}, and \ref{ex 3}. 
\section{Value-semigroups of irreducible algebroid curves}
In the rest part of this paper, we consider only algebroid curves over the algebraically closed field $K$. 

In this section, we an algorithm for computing the value-semigroup of an irreducible algebroid curve 
$K\llbracket \x\rrbracket/\p$. 
\subsection{Numerical semigroups}
We call a subsemigroup of $\N$ a {\it numerical semigroup}. 
\begin{Definition}\label{def of semigroups}
\begin{enumerate}
(1) 
Let $A$ be an irreducible algebroid curve over $K$, and $K\llbracket t\llbracket$ the integral closure of $A$. 
We set 
\[
\Semi(A)=\{\ord_t(\eta)\mid 0\neq\eta\in A\}=\{\ell_A(A/\eta A)\mid 0\neq\eta\in A\}, 
\]
and call it the {\it value-semigroup} of $A$. 

(2) 
Let $\p\subset K\llbracket \x\rrbracket$ be a one-dimensional prime ideal. 
We set 
\[
\Semi(\p)=\{\int(f;\p)\mid f\in K\llbracket \x\rrbracket, f\not\in\p \}. 
\]

(3)
For $\w=(w_1,\dots,w_r)\in \N_+^r$, we set 
\[
\Semi(\w)=\sum_{i=1}^r \N\cdot w_i. 
\]
\end{enumerate}
\end{Definition}
Note that $\Semi(\bw(\p))\subset\Semi(\p)=\Semi(K\llbracket \x\rrbracket/\p)$ 
for a one-dimensional prime ideal $\p\subset K\llbracket \x\rrbracket$. 
It is easy to show that $\gcd(\Semi(A))=1$. 
The value-semigroup of an irreducible algebroid curve $A$ is deeply related to the singularity of $A$. 
It is known that $A$ is Gorenstein if and only if $\Semi(A)$ is symmetric (\cite{Kunz}). 
In case where $\ch(k)=0$, for an irreducible bivariate power series $F\in K\llbracket x,y\rrbracket$, 
the conductor of $\Semi(F)$ coincides with the Milnor number of $F$. 

We use Gr\"obner bases for solving membership problem for numerical semigroups. 
\begin{Definition}\label{monomial curve}
For a positive integer vector $\w=(w_1,\dots,w_r)\in \N_+^r$, 
we denote by 
\[
\Prim(\w)=\langle \x^\a-\x^\b\mid \a,\b\in \N^r, \a\cdot \w=\b\cdot \w \rangle\subset K[\x], 
\]
the kernel of the ring homomorphism $K[\x]\to K[t]$, $x_i\mapsto t^{w_i}$. 
\end{Definition}
The ideal $\Prim(\w)$ is a homogeneous prime ideal of the weighted polynomial ring $K[x_1,\dots,x_r]$ with $\deg x_i=w_i$. 
If $\Semi(\w)$ is complete intersection, a system of generators of $\Prim(\w)$ is given in  \cite{Delorme}. 

We say that a term order $\prec$ on $K[\x,t]$ is a {\it $t$-elimination oder} 
if $u\prec v$ for any two monomials $u\in K[\x]$ and $v\in K[\x,t]\backslash K[\x]$. 
The next lemma follows from the elimination theory using Gr\"obner basis (see \cite{Cox1}, \cite{Cox2}). 
\begin{Lemma}\label{is in semigroup}
Let $\w=(w_1,\dots,w_r)\in \N^r$, and $N\in \N$. 
Let $G$ be a Gr\"obner basis of $\langle x_i-t^{w_i}\mid 1\le i \le r \rangle\subset {K[t,\x]}$ 
with respect to some $t$-elimination order $\prec$. 
Then the following hold. 
\begin{enumerate}
\item 
$G\cap K[\x]$ is a Gr\"obner basis of $\Prim(\w)$ with respect to $\prec$. 
\item 
Let $u$ be the remainder of $t^N$ on the division by $(G,\prec)$. 
Then $N\in \Semi(\w)$ if and only if $u\in K[\x]$. 
Furthermore, if this is the case, $u=\x^\a$ where $\a\in \N^r$ with $N=\a\cdot\w$. 
\end{enumerate}
\end{Lemma}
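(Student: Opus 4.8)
The plan is to reduce everything to the standard theory of elimination with Gr\"obner bases, once the ideals in sight are identified correctly. Write $L=\langle x_i-t^{w_i}\mid 1\le i\le r\rangle\subset K[t,\x]$, and let $\phi\colon K[t,\x]\to K[t]$ be the $K$-algebra homomorphism with $\phi(t)=t$ and $\phi(x_i)=t^{w_i}$. First I would observe that $\phi$ kills $L$ and hence induces a surjection $K[t,\x]/L\to K[t]$, for which the map $K[t]\to K[t,\x]/L$ sending $t$ to the class of $t$ is a two-sided inverse (using $x_i\equiv t^{w_i}\pmod L$); so $K[t,\x]/L\cong K[t]$ and $L=\Ker\phi$, and therefore $L\cap K[\x]=\Ker(\phi|_{K[\x]})=\Prim(\w)$ by Definition~\ref{monomial curve}. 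The second, and central, observation is: for every $g\in K[t,\x]\setminus K[\x]$, the leading monomial $\LM(g)$ (with respect to $\prec$) involves $t$. Indeed $g$ has at least one monomial involving $t$, and by the definition of a $t$-elimination order every monomial involving $t$ is $\prec$-larger than every monomial in $K[\x]$, so the $\prec$-maximal monomial of $g$ involves $t$. Equivalently, $\LM(g)\in K[\x]$ forces $g\in K[\x]$.

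Part~(1) is then the Elimination Theorem (\cite{Cox1}, \cite{Cox2}), which here I would also argue directly: $G\cap K[\x]\subset L\cap K[\x]=\Prim(\w)$ is clear, so it suffices to show that $\langle\LM(g)\mid g\in G\cap K[\x]\rangle$ contains $\LM(f)$ for every nonzero $f\in\Prim(\w)$. Since $G$ is a Gr\"obner basis of $L$, some $g\in G$ has $\LM(g)\mid\LM(f)$; but $\LM(f)\in K[\x]$, hence $\LM(g)\in K[\x]$, hence $g\in K[\x]$ by the central observation, i.e.\ $g\in G\cap K[\x]$. This gives the required containment, so $G\cap K[\x]$ is a Gr\"obner basis of $\Prim(\w)$.

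For Part~(2) I would use that the remainder of a polynomial on division by a Gr\"obner basis is its unique normal form modulo $L$, depending only on $L$ and $\prec$ (not on the chosen basis or the order of reduction steps). Suppose $N\in\Semi(\w)$, say $N=\a\cdot\w$ with $\a\in\N^r$. Then $t^N-\x^\a=(t^{w_1})^{a_1}\cdots(t^{w_r})^{a_r}-x_1^{a_1}\cdots x_r^{a_r}\in L$, so $u$ is also the remainder of $\x^\a$ on division by $(G,\prec)$. But in the division of the polynomial $\x^\a\in K[\x]$, only elements of $G\cap K[\x]$ can ever be applied, since by the central observation the leading monomial of any $g\in G\setminus K[\x]$ involves $t$ and hence divides no monomial of $K[\x]$; therefore the entire division stays inside $K[\x]$ and $u\in K[\x]$. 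Conversely, if $u\in K[\x]$, then $t^N-u\in L$ yields $t^N=\phi(u)\in\phi(K[\x])=K[t^{w_1},\dots,t^{w_r}]$, and the latter is, as a $K$-vector space, spanned by $\{t^m\mid m\in\Semi(\w)\}$, so $N\in\Semi(\w)$. It remains to see that $u$ is a monomial when it lies in $K[\x]$: since $L$ is generated by the binomials $x_i-t^{w_i}$, its reduced Gr\"obner basis consists of binomials (Buchberger's algorithm, and the interreduction producing the reduced basis, preserve binomiality), and by uniqueness of the normal form we may compute $u$ with that basis; reducing the monomial $\x^\a$ by binomials produces only monomials, so $u=\x^{\a'}$ for some $\a'\in\N^r$, and then $t^N-\x^{\a'}\in L$ forces $t^N=\phi(\x^{\a'})=t^{\a'\cdot\w}$, i.e.\ $N=\a'\cdot\w$.

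Almost all of this is routine; the one step I expect to need the most care is the very last point --- showing that the normal form of $t^N$ is an honest monomial --- which is precisely why I would pass to a binomial Gr\"obner basis (legitimate by uniqueness of normal forms) rather than argue with the arbitrary $G$ given in the statement. Everything else is the Elimination Theorem together with the isomorphism $K[t,\x]/L\cong K[t]$ and the elementary description of $\phi(K[\x])$ in terms of $\Semi(\w)$.
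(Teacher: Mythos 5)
Your proof is correct and follows the same route the paper intends: the paper gives no argument beyond citing the elimination theorem for Gr\"obner bases (\cite{Cox1}, \cite{Cox2}), and your write-up is exactly that approach, filled in with the identification $L\cap K[\x]=\Prim(\w)$, uniqueness of normal forms, and the passage to a binomial reduced basis to see that the remainder is a monomial $\x^\a$ with $N=\a\cdot\w$. No gaps; the extra care at the last step (binomiality of the reduced Gr\"obner basis of $\langle x_i-t^{w_i}\rangle$) is precisely the right point to make explicit.
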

The ascending chain condition holds for numerical semigroups. 
\begin{Lemma}\label{acc for semigroup}
Let $H_1\subset H_2\subset \dots$ be a sequence of numerical semigroups. 
Then there exists $i_0\in \N$ such that $H_i=H_{i_0}$ for all $i\ge i_0$. 
\end{Lemma}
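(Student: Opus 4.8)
The plan is to pass to the union of the chain and show it is already attained at a finite stage. Set $H=\bigcup_{i\ge 1}H_i$. Since the $H_i$ form an increasing chain, $H$ is again a subsemigroup of $\N$, and it suffices to produce an index $i_0$ with $H_{i_0}=H$: for then $H_{i_0}\subseteq H_i\subseteq H=H_{i_0}$ for every $i\ge i_0$, forcing equality. If $H$ has no nonzero element this is immediate, so assume $H$ contains a nonzero element and put $d=\gcd\{n\in H\mid n\neq 0\}$; every nonzero element of $H$ is then a multiple of $d$.

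The key input is the classical Sylvester--Frobenius fact that a subsemigroup of $\N$ whose nonzero elements have $\gcd$ equal to $1$ has finite complement in $\N$. I would use it as follows. Choose finitely many nonzero elements $a_1,\dots,a_k\in H$ with $\gcd(a_1,\dots,a_k)=d$ (possible since $d$ is the $\gcd$ of the whole set of nonzero elements of $H$). Applying the fact to the subsemigroup generated by $a_1/d,\dots,a_k/d$, which has $\gcd$ one, we get a bound $c\in\N$ such that the semigroup $\langle a_1,\dots,a_k\rangle$ contains every multiple of $d$ that is $\ge c$. Enlarging $c$ if necessary, the set $H\cap[0,c)$ is finite, say equal to $\{b_1,\dots,b_m\}$.

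Now each $a_j$ lies in some $H_{p_j}$ and each $b_\ell$ lies in some $H_{q_\ell}$, because $H$ is the union of the chain. Put $i_0=\max\{p_1,\dots,p_k,q_1,\dots,q_m\}$; then $H_{i_0}$ contains all of $a_1,\dots,a_k$ and all of $b_1,\dots,b_m$. Given an arbitrary $n\in H$: if $n<c$ then $n=b_\ell$ for some $\ell$, hence $n\in H_{i_0}$; if $n\ge c$ then $n$ is a nonzero element of $H$, hence a multiple of $d$ with $n\ge c$, hence $n\in\langle a_1,\dots,a_k\rangle\subseteq H_{i_0}$. Thus $H\subseteq H_{i_0}\subseteq H$, so $H_{i_0}=H$ and the chain stabilizes from $i_0$ on.

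The only substantive ingredient is the cofiniteness of a $\gcd$-one subsemigroup of $\N$, which is elementary; the rest is bookkeeping with the chain. The minor points to watch are the degenerate cases (an $H_i$ reduced to $\{0\}$, or subsemigroups not containing $0$), which are disposed of as above, and the routine verification that the union of an increasing chain of subsemigroups is again a subsemigroup.
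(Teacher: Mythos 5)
Your proof is correct, but it takes a genuinely different route from the paper's. You pass to the union $H=\bigcup_i H_i$ and show it is already attained at a finite stage: finitely many elements $a_1,\dots,a_k\in H$ realize the gcd $d$, the classical conductor fact shows $\langle a_1,\dots,a_k\rangle$ contains every sufficiently large multiple of $d$, and the finitely many remaining small elements of $H$ are swept into some $H_{i_0}$ — in effect you prove the union is a finitely generated semigroup and deduce the ascending chain condition from that. The paper instead runs a descent on the invariant $(m_i,n_i)=\bigl(\gcd(H_i),\,\#\bigl((\N\cdot m_i)\setminus H_i\bigr)\bigr)$: this pair is non-increasing for the lexicographic order on $\N^2$, which is a well-order, so it stabilizes, and once both the gcd and the number of gaps are constant the inclusions $H_i\subset H_{i+1}$ force equality. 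Both arguments rest on the same elementary ingredient, namely that a subsemigroup of $\N$ whose nonzero elements have gcd $m$ contains all sufficiently large multiples of $m$ (for the paper this is exactly what makes $n_i$ finite; for you it supplies the conductor bound $c$). Your version yields the slightly stronger conclusion that the limit semigroup $H$ itself equals some $H_{i_0}$ and is finitely generated, at the cost of choosing generators and a conductor; the paper's descent is shorter and avoids those choices. Your treatment of the degenerate cases ($H$ without nonzero elements, and taking $c\ge 1$ so that elements $\ge c$ are genuinely nonzero multiples of $d$) is adequate.
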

\begin{proof}
Let $\preceq_{lex}$ be the lexicographic order on $\N^2$, that is, 
$(a_1,a_2)\prec_{lex}(b_1,b_2)$ if and only if $a_1<a_2$, or $a_1=b_1$ and $a_2<b_2$. 
Set $m_i=\gcd(H_i)$, and $n_i=\#((\N\cdot m_i)\backslash H_i) \in \N$. 
It is easy to see that $(m_{i+1},n_{i+1})\preceq_{lex} (m_i,n_i)$ for all $i$. 
Since $\preceq_{lex}$ is a well-ordering, 
there exists $i_0\in \N$ such that $(m_i,n_i)=(m_{i_0},n_{i_0})$ for all $i\ge i_0$. 
Therefore $H_i=H_{i_0}$ for all $i\ge i_0$. 
\end{proof}
\subsection{Local SAGBI bases}
In this section, we introduce a new notion of local SAGBI bases which is a variant of SAGBI bases 
(or canonical subalgebra bases) in local rings. 
See \cite{Sturmfels1} Chapter 11 for canonical subalgebra bases. 

Let $A$ be an algebroid curve over $K$. 
Since $K$ is algebraically closed, the integral closure of $A$ is isomorphic to $K\llbracket t\rrbracket$. 
Let $\bsxi=(\xi_1,\dots,\xi_r)$ such that $0\neq \xi_i\in tK\llbracket t\rrbracket$ for all $i$ and $A=K\llbracket \bsxi\rrbracket$. 
We set 
\begin{eqnarray*}
\phi_\bsxi:~ K\llbracket\x\rrbracket\to K\llbracket t\rrbracket, &&~x_i\mapsto \xi_i, \\
\phi_{\initial_t(\bsxi)}:~ K[\x]\to K[t], &&~x_i\mapsto \initial_t(\xi_i). 
\end{eqnarray*}
Note that $\Prim(\ord_t(\bsxi))=\sigma(\Ker\phi_{\initial_t(\bsxi)})$ for some $\sigma: K[\x]\to K[\x]$, $x_i\mapsto \alpha_i x_i$, where $\alpha_i\in K^\times$. 
In particular, $\LT_\prec(\Prim(\ord_t(\bsxi)))=\LT_\prec(\Ker\phi_{\initial_t(\bsxi)})$ for any term order $\prec$.  
\begin{Definition}
We call 
\[
K[\initial_t(A)]:=K[\initial_t(\eta)\mid \eta\in A]=K[t^i\mid i\in \Semi(A)] 
\]
the {\it initial algebra} of $A$. 
We say that $\bsxi$ is a {\it local SAGBI basis} of $A$ 
if $K[\initial_t(A)]=K[\initial_t(\bsxi)]$, 
in other words, $\Semi(A)=\Semi(\ord_t(\bsxi))$. 
\end{Definition}
\begin{Remark}
It holds that $K[\initial_t(\bsxi)]\subset K[\initial_t(A)]$, but the equality does not hold in general. 
Since $\Semi(A)$ is finitely generated as a semigroup, there exists a finite local SAGBI basis of $A$. 
The assumption that $A$ is of dimension one is essential for the existence of finite local SAGBI bases. 
\end{Remark}
\begin{Proposition-Definition}[local reduction]\label{local reduction}
Let $\eta\in K\llbracket t \rrbracket$. 
We set $\w=\ord_t(\bsxi)$. 
Then there exist $q\in K\llbracket \x \rrbracket$ and $\zeta\in K\llbracket \bsxi\rrbracket$ satisfying the following: 
\begin{enumerate}
\item $\eta=q(\bsxi)+\zeta$. 
\item $\initial_t(\zeta)\not\in K[\initial_t(\bsxi)]$ if $\zeta\neq 0$. 
\item $\ord_t(\eta)=\ord_\w(q)$ if $q\neq 0$. 
\end{enumerate}
We call $f$ a quotient, and $\zeta$ a remainder of $\eta$ on local reduction by $\bsxi$. 
\end{Proposition-Definition}
\begin{proof}
We define $\eta_i\in K\llbracket \bsxi\rrbracket$, $\alpha_i\in K$, and $\a_i\in \N^r$ inductively on $i$ in the following manner: 
Set $\eta_0=\eta$. 
If $\initial_t(\eta_i)\in K[\initial_t(\bsxi)]$, take ${\c_i}\in\N^r$ and $\beta_i\in K$ such that 
$\ord_t(\eta_i)=\ord_t(\bsxi^{\c_i})$ and $\initial_t(\eta_i)=\beta_i\initial_t(\bsxi^{\c_i})$. 
We set $\eta_{i+1}=\eta_i-\beta_i \bsxi^{\c_i}$. 

If $\eta_m\not\in K[\initial_t(\bsxi)]$ or $\eta_m=0$ for some $m\in \N$, 
then $q:=\sum_{i=0}^{m-1} \beta_i\x^{\c_i}$ and $\zeta:=\eta_m$ satisfy the desired conditions. 
If $0\neq \eta_i\in K[\initial_t(\bsxi)]$ for all $i$, 
then $q:=\sum_{i=0}^{\infty} \beta_i\x^{\c_i}$ and $\zeta:=0$ satisfy the desired conditions. 
\end{proof}
If $q\neq 0$, then $\initial_t(\eta)=\initial_\w(q)(\initial_t(\bsxi))$ by Proposition-Definition \ref{local reduction} (3). 
In general, it takes infinite time to compute local reduction. 
In the special case where $\gcd(\ord_t(\bsxi))=1$, the remainder of $\eta\in K\llbracket t\rrbracket$ on local reduction by $\bsxi$ is zero if $\ord_t(\eta)\ge c$ 
where $c$ is the conductor of $\Semi(\ord_t(\bsxi))$. 
Hence one can compute the remainder of any element on local reduction by $\bsxi$ in finite time in this case. 
\begin{Theorem}\label{initial in ker}
It holds that $\initial_\w(\Ker \phi_\bsxi)\subset \Ker \phi_{\initial_t(\bsxi)}$ and $\sqrt{\initial_\w(\Ker \phi_\bsxi)}= \Ker \phi_{\initial_t(\bsxi)}$ 
where $\w=\ord_t(\bsxi)$. 
\end{Theorem}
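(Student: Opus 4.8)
The plan is to analyze the two assertions separately, both by comparing the grading structure of $K\llbracket\x\rrbracket$ induced by $\w=\ord_t(\bsxi)$ with the standard grading on $K[\x]$ pulled back along $\phi_{\initial_t(\bsxi)}$. For the inclusion $\initial_\w(\Ker\phi_\bsxi)\subset \Ker\phi_{\initial_t(\bsxi)}$, I would take a nonzero $g\in \Ker\phi_\bsxi$ and examine what happens when one substitutes $x_i\mapsto \xi_i$ into $g$. Write $g = \initial_\w(g) + h$ where every monomial of $h$ has strictly larger $\w$-weight than $d:=\ord_\w(g)$. Substituting $\bsxi$, the lowest-order terms in $t$ coming from $g(\bsxi)$ are exactly those produced by $\initial_\w(g)$ evaluated at $\initial_t(\bsxi)$: indeed each monomial $\x^\a$ of $\w$-weight $d$ contributes $\bsxi^\a$ whose leading term is $\initial_t(\bsxi^\a) = \initial_t(\bsxi)^\a$ of $t$-order exactly $d$, while every monomial of $h$ contributes terms of $t$-order $>d$. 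Since $g(\bsxi)=0$, the sum of these leading contributions must vanish, i.e.\ $\initial_\w(g)(\initial_t(\bsxi)) = 0$, which says precisely $\initial_\w(g)\in\Ker\phi_{\initial_t(\bsxi)}$. (One must also check that no cancellation with $h$-terms can occur at $t$-order $d$ — it cannot, since all $h$-contributions have order $>d$.) This gives the inclusion on generators, and since $\Ker\phi_{\initial_t(\bsxi)}$ is an ideal, the inclusion of ideals follows.

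For the radical identity, the inclusion $\sqrt{\initial_\w(\Ker\phi_\bsxi)}\subset\Ker\phi_{\initial_t(\bsxi)}$ is immediate from the first part together with the fact that $\Ker\phi_{\initial_t(\bsxi)}$ is prime (it is the kernel of a map to the domain $K[t]$), hence radical. The substantive direction is $\Ker\phi_{\initial_t(\bsxi)}\subset\sqrt{\initial_\w(\Ker\phi_\bsxi)}$. Here I would invoke Theorem \ref{FTLTV} and Lemma \ref{intersection of ideals}(2): set $\p=\Ker\phi_\bsxi$, which is a one-dimensional prime (the image $A=K\llbracket\bsxi\rrbracket$ is a one-dimensional domain). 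By Theorem \ref{T(p)=w(p)}, $\w=\ord_t(\bsxi)\in\T_{\rm loc}(\p)$, so $\initial_\w(\p)$ contains no monomial; moreover $\sqrt{\initial_\w(\p)}$ is again an ideal of $K[\x]$ whose vanishing locus, by the monomial-free property and Theorem \ref{FTLTV}, is governed by the geometry of $V_\M(\p)$. Concretely, every $\boldsymbol\alpha\in V_K(\initial_\w(\p))$ lifts, by the moreover-part of Theorem \ref{FTLTV}, to a point of $V_\M(\p)$, and conversely the leading-coefficient vector of $\bsxi$ itself, namely the tuple of coefficients $\alpha_i$ with $\initial_t(\xi_i)=\alpha_i t^{w_i}$, lies in $V_K(\initial_\w(\p))$ by the first part of this theorem. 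The key point is that $V_K(\initial_\w(\p))$ is exactly the $\sigma$-twist of $V_K(\Ker\phi_{\initial_t(\bsxi)})$: both are one-dimensional (a curve through the origin in $\mathbb{A}^r$) and, by the remark preceding the Definition of local SAGBI basis, $\Prim(\ord_t(\bsxi))=\sigma(\Ker\phi_{\initial_t(\bsxi)})$. So I would show $V_K(\initial_\w(\p)) = V_K(\Ker\phi_{\initial_t(\bsxi)})$ as sets, and then conclude by the Nullstellensatz that $\sqrt{\initial_\w(\p)} = \sqrt{\Ker\phi_{\initial_t(\bsxi)}} = \Ker\phi_{\initial_t(\bsxi)}$.

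The main obstacle I anticipate is the set-theoretic equality $V_K(\initial_\w(\p)) = V_K(\Ker\phi_{\initial_t(\bsxi)})$ — or equivalently pinning down that $\initial_\w(\p)$, while not necessarily prime, has the "right" radical. The inclusion $\initial_\w(\p)\subseteq \Ker\phi_{\initial_t(\bsxi)}$ (first part) gives $V_K(\initial_\w(\p))\supseteq V_K(\Ker\phi_{\initial_t(\bsxi)})$, so what needs work is the reverse containment of varieties: that $\initial_\w(\p)$ has no extra solutions beyond the monomial curve. I would argue this by a dimension count: $\dim K[\x]/\initial_\w(\p) = \dim K\llbracket\x\rrbracket/\p = 1$ (initial ideals with respect to a weight preserve dimension, the standard flat-degeneration argument), while $\Ker\phi_{\initial_t(\bsxi)}$ also has a one-dimensional quotient; combined with the inclusion of ideals and the irreducibility of $V_K(\Ker\phi_{\initial_t(\bsxi)})$, equality of the varieties follows because a proper closed subvariety of an irreducible curve is finite, yet must still contain the full curve by the reverse inclusion of ideals. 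Assembling these gives $\sqrt{\initial_\w(\Ker\phi_\bsxi)} = \Ker\phi_{\initial_t(\bsxi)}$, completing the proof.
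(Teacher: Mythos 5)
Your first half (the inclusion $\initial_\w(\Ker\phi_\bsxi)\subset\Ker\phi_{\initial_t(\bsxi)}$) is correct and is essentially the paper's own argument. The gap is in the second half, at the step where you claim $V_K(\initial_\w(\p))=V_K(\Ker\phi_{\initial_t(\bsxi)})$ "by a dimension count." The known inclusion of ideals gives $V_K(\initial_\w(\p))\supseteq V_K(\Ker\phi_{\initial_t(\bsxi)})$, and knowing that $\dim K[\x]/\initial_\w(\p)=1$ does not force equality: $V_K(\initial_\w(\p))$ need not be irreducible, and it could a priori contain an extra one-dimensional component besides the curve $V_K(\Ker\phi_{\initial_t(\bsxi)})$. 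Your sentence about "a proper closed subvariety of an irreducible curve" runs the containment in the wrong direction — the irreducible object is the small one here, so nothing is finite. The reverse set-inclusion $V_K(\initial_\w(\p))\subseteq V_K(\Ker\phi_{\initial_t(\bsxi)})$ is exactly the substance of the theorem and is not supplied by equidimensionality of the degeneration. (The $\sigma$-twist remark also does not help: $\sigma$ relates $\Prim(\w)$ to $\Ker\phi_{\initial_t(\bsxi)}$, not $\initial_\w(\p)$ to anything.)

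What is missing is the paper's use of the specific weight $\w=\ord_t(\bsxi)=\bw(\p)$ through Theorems \ref{FTLTV} and \ref{T(p)=w(p)}. First, every nonzero point of $V_K(\initial_\w(\p))$ must lie in $(K^\times)^r$: a point with some but not all coordinates zero would lift, by the moreover-part of Theorem \ref{FTLTV}, to $\bseta\in V_\M(\p)$ with $\ord_t(\bseta)\not\in\pR\cdot\w$, contradicting $\ord_t(V_\M(\p))=\pR\cdot\bw(\p)$. Second — and this is the step you mention but never exploit — a torus point $\boldsymbol{\alpha}$ lifts to $\bseta\in V_\M(\p)$ with $\initial_t(\bseta)=(\alpha_1t^{w_1},\dots,\alpha_rt^{w_r})$, and for \emph{any} $0\neq\bseta\in V_\M(\p)$ one has $\ord_t(f(\bseta))=\ord_t(s)\cdot\int(f;\p)$ and $\ord_t(\bseta)=\ord_t(s)\cdot\w$ (as in the proof of Theorem \ref{T(p)=w(p)}), so whether $\initial_\w(f)$ vanishes at $\initial_t(\bseta)$ depends only on $\p$, not on the chosen point; hence the defining ideal of $K[\initial_t(\bseta)]$ equals $\Ker\phi_{\initial_t(\bsxi)}$, which puts $\boldsymbol{\alpha}$ on $V_K(\Ker\phi_{\initial_t(\bsxi)})$. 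Only after this does the Nullstellensatz give $\Ker\phi_{\initial_t(\bsxi)}\subset\sqrt{\initial_\w(\Ker\phi_\bsxi)}$. Without this (or an equivalent argument showing all points of $V_\M(\p)$ impose the same leading-term relations), your proof does not close.
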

\begin{proof}
Let $f\in \Ker \phi_\bsxi$, and $f_0=\initial_\w(f)\in K[\x]$. 
Since the lowest order terms appearing in the expansion of $f(\bsxi)$ is $f_0(\initial_t(\bsxi))$ which should be also zero. 
Hence $f_0\in \Ker \phi_{\initial_t(\bsxi)}$, and thus $\initial_\w(\Ker \phi_\bsxi)\subset \Ker \phi_{\initial_t(\bsxi)}$. 

We will prove that $\sqrt{\initial_\w(\Ker \phi_\bsxi)}= \Ker \phi_{\initial_t(\bsxi)}$. 
Set $\p=\Ker \phi_\bsxi$. 
Then $V_K(\initial_\w(\p))=(V_{K}(\initial_\w(\p))\cap (K^\times)^r)\cup \{0\}$, 
since $0\neq (\alpha_1,\dots,\alpha_r) \in V_K(\initial_\w(\p))$ with $\alpha_i=0$ for some $i$ induces 
$\bseta\in V_\M(\p)$ such that $\ord_t(\bseta)\not\in \pR\cdot \w$ by Theorem \ref{FTLTV}. 

Take any $0\neq \bseta\in V_\M(\p)$ and consider the defining ideal of $K[\initial_t(\bseta)]$. 
Let $K\llbracket s\rrbracket \subset \k^\circ$ be the integral closure of $K\llbracket \bseta\rrbracket$ in $\k^\circ$. 
As we saw in the proof of Theorem \ref{T(p)=w(p)}, 
$\ord_t(f(\bseta))=\ord_t(s)\cdot \ord_s(f(\bseta))=\ord_t(s)\cdot \int(f;\p)$, and 
$\ord_t(\bseta)=\ord_t(s)\cdot \w$ for $f\in K\llbracket \x\rrbracket$. 
Therefore $\initial_\w(f)(\initial_t(\bseta)=0$ if and only if $\ord_\w(f)>\int(f;\p)$. 
Hence the defining ideal of $K[\initial_t(\bseta)]$ is independent form choice of $\bseta\in V_\M(\p)$, 
and thus coincides with $\Ker \phi_{\initial_t(\bsxi)}$ as $\bsxi\in V_\M(\p)$. 
For $(\alpha_1,\dots,\alpha_r) \in V_{K}(\initial_\w(\p))\cap (K^\times)^r$, there exists $\bseta\in V_\M(\p)$ such that 
$\initial_t(\bseta)=(\alpha_1t^{w_1},\dots,\alpha_rt^{w_r})$ by Theorem \ref{FTLTV}. 
This implies $(\alpha_1,\dots,\alpha_r) \in V_K(\Ker \phi_{\initial_t(\bsxi)})$. 

Therefore we conclude that $V_K(\initial_\w(\p))\subset  V_K(\Ker \phi_{\initial_t(\bsxi)})$. 
By Hilbert's Nullstellensatz, this implies that $\sqrt{\initial_\w(\Ker \phi_\bsxi)}\supset \Ker \phi_{\initial_t(\bsxi)}$. 
Hence $\sqrt{\initial_\w(\Ker \phi_\bsxi)}= \Ker \phi_{\initial_t(\bsxi)}$. 
\end{proof}
\begin{Proposition}\label{red 0}
Assume that $\bsxi=(\xi_1,\dots,\xi_r)$ is a local SAGBI basis of $A$. 
For $\eta\in K\llbracket t\rrbracket$, 
$\eta\in A$ if and only if some (any) remainder of $\eta$ on local reduction by $\bsxi$ is zero. 
In particular, if $\bsxi$ is a local SAGBI basis of $A$, then $K\llbracket\bsxi\rrbracket=A$. 
\end{Proposition}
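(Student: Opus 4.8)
The plan is to unwind the definitions and use the key structural result Theorem~\ref{initial in ker} (and the local reduction process of Proposition-Definition~\ref{local reduction}) to control remainders. Write $\w=\ord_t(\bsxi)$ and $\p=\Ker\phi_\bsxi$, so that $A=K\llbracket\x\rrbracket/\p\cong K\llbracket\bsxi\rrbracket$ via $\phi_\bsxi$. First I would observe that the ``only if'' direction is essentially formal: if $\eta\in A=K\llbracket\bsxi\rrbracket$, write $\eta=f(\bsxi)$ for some $f\in K\llbracket\x\rrbracket$, and run local reduction on $\eta$. Each reduction step $\eta_i\mapsto\eta_{i+1}=\eta_i-\beta_i\bsxi^{\c_i}$ subtracts an element of $K\llbracket\bsxi\rrbracket$, so every $\eta_i$ stays in $A$; hence if the process terminates with a remainder $\zeta=\eta_m$, then $\zeta\in A$, and by property (2) of local reduction $\initial_t(\zeta)\notin K[\initial_t(\bsxi)]$ unless $\zeta=0$. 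But $\initial_t(\zeta)\in K[\initial_t(A)]=K[\initial_t(\bsxi)]$ because $\bsxi$ is a local SAGBI basis, a contradiction unless $\zeta=0$; and if the process does not terminate, $\zeta=0$ by construction. So any remainder of $\eta\in A$ is zero. (I should also note the remainder is well-defined enough that ``some'' and ``any'' coincide here, via the same SAGBI argument applied to the difference of two alleged remainders.)

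For the ``if'' direction, suppose $\zeta$, a remainder of $\eta$ on local reduction by $\bsxi$, is zero. Then from property (1) of Proposition-Definition~\ref{local reduction}, $\eta=q(\bsxi)+\zeta=q(\bsxi)$ for the quotient $q\in K\llbracket\x\rrbracket$, so $\eta\in K\llbracket\bsxi\rrbracket=A$. (When the process is infinite, $q=\sum_{i=0}^\infty\beta_i\x^{\c_i}$; one must check this sum converges in $K\llbracket\x\rrbracket$, which follows because the $t$-orders $\ord_t(\eta_i)=\w\cdot\c_i$ strictly increase, forcing $|\c_i|\to\infty$ since the entries of $\w$ are finite positive integers, so only finitely many $\c_i$ have bounded degree.) This immediately gives $K\llbracket\bsxi\rrbracket=A$ as the ``in particular'': given any $\eta\in A$, the first direction produces a zero remainder, but that is automatic; rather, the content is that $K\llbracket\bsxi\rrbracket$ already equals $A$ by definition of $A=K\llbracket\bsxi\rrbracket$, so perhaps the intended ``in particular'' is the sharper statement that, {\em without} assuming a priori that $\bsxi$ generates the maximal-dimension object, the SAGBI condition forces $K\llbracket\bsxi\rrbracket$ to be the full ring $A$ in whatever ambient sense $A$ was fixed; I would phrase this carefully to match the surrounding setup.

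The main obstacle I anticipate is the termination/convergence bookkeeping when local reduction runs forever: one must argue that the formal sum defining $q$ is a genuine element of $K\llbracket\x\rrbracket$ and that $q(\bsxi)$ really reproduces $\eta$ in $K\llbracket t\rrbracket$, i.e. that the ``limit'' of the $\eta_i$ is $0$. This is where property (3), $\ord_t(\eta_i)=\ord_\w(\cdot)$ strictly increasing, does the work: since $\gcd(\w)$ need not be $1$ in general the conductor argument from the remark after Proposition-Definition~\ref{local reduction} may not apply, so I would instead give the direct argument that $\ord_t(\eta_i)\to\infty$ (strictly increasing sequence of non-negative integers, or reals bounded below with no accumulation point because the value set $\Semi(\ord_t(\bsxi))$ is discrete), hence $\eta_i\to 0$ $t$-adically and $\sum\beta_i\bsxi^{\c_i}$ converges to $\eta$, while simultaneously $\x^{\c_i}\to 0$ $\x$-adically so $q$ is well-defined. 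The rest is routine and I would not belabor it.
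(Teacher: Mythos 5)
Your argument is correct and follows the same route the paper intends: the paper simply states that the proposition ``follows obviously from the definition of local SAGBI bases and remainders of local reduction,'' and your write-up is exactly that unwinding (SAGBI condition kills any nonzero terminating remainder of $\eta\in A$ via property (2), property (1) plus the convergence of $q$ gives the converse). Your convergence bookkeeping and your remark that the ``in particular'' clause only has content when $K\llbracket\bsxi\rrbracket\subset A$ is not assumed to be all of $A$ are both sound and consistent with the paper's setup.
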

\begin{proof}
The assertion follows obviously from the definition of local SAGBI bases and remainders of local reduction. 
\end{proof}
\begin{Lemma}\label{lift ker}
Let $\w=\ord_t(\bsxi)$, and 
$f \in\Ker(\phi_{\initial_t(\bsxi)})$ a homogeneous polynomial with respect to $\w$. 
If one can take zero as a remainder of $f(\bsxi)$ with the quotient $g$ on local reduction by $\bsxi$, 
then $f-g\in \Ker \phi_\bsxi$ and $f\in \initial_{\w}(\Ker \phi_\bsxi)$. 
\end{Lemma}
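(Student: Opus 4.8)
The plan is to unwind the definitions of local reduction and of the two ring homomorphisms $\phi_\bsxi$ and $\phi_{\initial_t(\bsxi)}$, and to track orders with respect to $\w=\ord_t(\bsxi)$ throughout. First I would apply $\phi_\bsxi$ to $f$: since $f\in\Ker(\phi_{\initial_t(\bsxi)})$ and $f$ is $\w$-homogeneous, the term $f(\initial_t(\bsxi))$ that would otherwise be the lowest-order term of $f(\bsxi)$ cancels, so $\ord_t(f(\bsxi))>\ord_\w(f)$; in particular $f(\bsxi)$ is a genuine element of $K\llbracket t\rrbracket$ on which local reduction makes sense. By hypothesis one can take $0$ as a remainder of $f(\bsxi)$ with quotient $g$, which by Proposition-Definition \ref{local reduction}(1) means precisely $f(\bsxi)=g(\bsxi)+0=g(\bsxi)$, i.e. $(f-g)(\bsxi)=0$, which is the statement $f-g\in\Ker\phi_\bsxi$.

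Next I would pin down $\ord_\w(g)$ in order to extract the initial form. By Proposition-Definition \ref{local reduction}(3), since the remainder is $0$, the quotient $g$ satisfies $\ord_\w(g)=\ord_t(f(\bsxi))$ (taking $\eta=f(\bsxi)$ there), and we already observed $\ord_t(f(\bsxi))>\ord_\w(f)$. Hence $\ord_\w(g)>\ord_\w(f)$, so in the difference $f-g$ the lowest $\w$-degree part is contributed entirely by $f$, and $f$ being $\w$-homogeneous gives $\initial_\w(f-g)=f$. Since $f-g\in\Ker\phi_\bsxi$, this exhibits $f$ as the $\w$-initial form of an element of $\Ker\phi_\bsxi$, i.e. $f\in\initial_\w(\Ker\phi_\bsxi)$, as desired.

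There is a small degenerate case to dispose of: if $g=0$, then $f(\bsxi)=0$ directly, $f\in\Ker\phi_\bsxi$, and $\initial_\w(f)=f$ trivially, so the conclusion holds a fortiori; the order comparison $\ord_\w(g)>\ord_\w(f)$ is then vacuous but unneeded. I expect the only real point requiring care is the order bookkeeping — making sure the cancellation of $f(\initial_t(\bsxi))$ is correctly read as a \emph{strict} increase in $t$-order of $f(\bsxi)$, and then feeding that strict inequality through Proposition-Definition \ref{local reduction}(3) to separate the $\w$-leading terms of $f$ and $g$. Everything else is a direct substitution into the already-established definitions, so no serious obstacle is anticipated.
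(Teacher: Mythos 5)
Your argument is correct and coincides with the paper's own proof: both deduce $f-g\in\Ker\phi_\bsxi$ from Proposition-Definition \ref{local reduction}(1), then use part (3) together with $f\in\Ker(\phi_{\initial_t(\bsxi)})$ to get $\ord_\w(g)=\ord_t(f(\bsxi))>\ord_\w(f)$, and conclude $f=\initial_\w(f-g)\in\initial_\w(\Ker\phi_\bsxi)$. Your extra remarks (making explicit why the cancellation of $f(\initial_t(\bsxi))$ forces the strict inequality, and the degenerate case $g=0$) are harmless elaborations of the same argument.
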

\begin{proof}
By Proposition-Definition \ref{local reduction} (1), $f(\bsxi)=g(\bsxi)$ and thus $f-g\in \Ker \phi_\bsxi$. 
By Proposition-Definition \ref{local reduction} (3) and $f \in\Ker(\phi_{\initial_t(\bsxi)})$, we have $\ord_\w(g)=\ord_t(f(\bsxi))>\ord_\w(f)$. 
Hence $f=\initial_\w(f-g)\in\initial_{\w}(\Ker \phi_\bsxi)$. 
\end{proof}
Similarly to SAGBI bases (\cite{Sturmfels1} Theorem 11.4), an analogue of Buchberger's criterion holds for local SAGBI basis. 
\begin{Theorem}\label{SB criterion}
Let $A=K\llbracket\bsxi\rrbracket\subset K\llbracket t\rrbracket$ be an irreducible algebroid curve over $K$, 
and set $\w=\ord_t(\bsxi)$. 
Let $G$ be a system of binomial generators of $\Ker(\phi_{\initial_t(\bsxi)})$. 
Then the following are equivalent: 
\begin{enumerate}
\item 
$\bsxi$ is a local SAGBI basis of $A$. 
\item 
Any remainder of $f(\bsxi)$ on local reduction by $\bsxi$ is zero for all $f\in K\llbracket \x \rrbracket$. 
\item 
One can take zero as a remainder of $f(\bsxi)$ on local reduction by $\bsxi$ for all $f\in G$. 
\item 
$\initial_\w(\Ker\phi_\bsxi)=\Ker(\phi_{\initial_t(\bsxi)})$. 
\end{enumerate}
\end{Theorem}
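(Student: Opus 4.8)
The plan is to prove the cycle of implications $(1)\Rightarrow(2)\Rightarrow(3)\Rightarrow(4)\Rightarrow(1)$, using the local reduction machinery of Proposition-Definition \ref{local reduction} together with Theorem \ref{initial in ker}. The implication $(1)\Rightarrow(2)$ follows directly from Proposition \ref{red 0}, which says that when $\bsxi$ is a local SAGBI basis, the remainder of any $\eta\in K\llbracket t\rrbracket$ on local reduction is zero precisely when $\eta\in A=K\llbracket\bsxi\rrbracket$; and $f(\bsxi)\in A$ for every $f\in K\llbracket\x\rrbracket$. The implication $(2)\Rightarrow(3)$ is immediate since $G\subset K\llbracket\x\rrbracket$ and the remainder is unique when it is zero (or one simply picks that remainder).

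The substantive implications are $(3)\Rightarrow(4)$ and $(4)\Rightarrow(1)$. For $(4)\Rightarrow(1)$: by Theorem \ref{initial in ker}, $\Ker\phi_{\initial_t(\bsxi)}$ is the defining ideal of $K[\initial_t(\bsxi)]$ and also $\sqrt{\initial_\w(\Ker\phi_\bsxi)}=\Ker\phi_{\initial_t(\bsxi)}$, while $\initial_\w(\Ker\phi_\bsxi)$ is the defining ideal of $K[\initial_t(A)]$ (this last point needs the observation that $\initial_\w(\Ker\phi_\bsxi)$ is a homogeneous ideal whose quotient ring has the same Hilbert function / monomial basis as $K[\initial_t(A)] = K[t^i\mid i\in\Semi(A)]$; equivalently, one checks $\Semi(\ord_t(\bsxi)) = \Semi(A)$ iff $\Ker\phi_{\initial_t(\bsxi)}$ and $\initial_\w(\Ker\phi_\bsxi)$ have the same standard monomials, which holds iff they are equal since one contains the other). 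Granting this, $(4)$ says these two defining ideals coincide, hence $K[\initial_t(\bsxi)]=K[\initial_t(A)]$, i.e. $\Semi(\ord_t(\bsxi))=\Semi(A)$, which is $(1)$.

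For $(3)\Rightarrow(4)$: the inclusion $\initial_\w(\Ker\phi_\bsxi)\subseteq\Ker\phi_{\initial_t(\bsxi)}$ is Theorem \ref{initial in ker}, so only the reverse inclusion needs work. Since $G$ is a system of binomial generators of the toric (hence homogeneous, prime) ideal $\Ker\phi_{\initial_t(\bsxi)}$, every element of that ideal is an $\w$-homogeneous combination of the $\x^\a-\x^\b\in G$ after decomposing into $\w$-graded pieces; so it suffices to show each generator $f\in G$ lies in $\initial_\w(\Ker\phi_\bsxi)$. But $f$ is $\w$-homogeneous and lies in $\Ker\phi_{\initial_t(\bsxi)}$, and hypothesis $(3)$ gives a local reduction of $f(\bsxi)$ to zero with some quotient $g$; Lemma \ref{lift ker} then yields exactly $f\in\initial_\w(\Ker\phi_\bsxi)$. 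The only gap to fill carefully is the reduction step from "each generator of $G$ is in $\initial_\w(\Ker\phi_\bsxi)$'' to "all of $\Ker\phi_{\initial_t(\bsxi)}$ is'', which requires that $\initial_\w(\Ker\phi_\bsxi)$ be an ideal (clear) and that a general element of $\Ker\phi_{\initial_t(\bsxi)}$ written as $\sum h_j f_j$ can be reorganized so that its $\w$-leading form is visibly in the ideal generated by the $f_j$'s — a standard argument using that the $f_j$ are $\w$-homogeneous, so taking $\w$-initial forms of both sides and cancelling is harmless.

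The main obstacle I expect is making precise the identification "$\initial_\w(\Ker\phi_\bsxi)$ is the defining ideal of the initial algebra $K[\initial_t(A)]$'': a priori one only knows its radical is $\Ker\phi_{\initial_t(\bsxi)}$, so one must argue that a homogeneous ideal sitting between a monomial-basis-defining toric ideal and its own radical, and cutting out the right semigroup, is determined by that semigroup — i.e. translate everything into the combinatorics of standard monomials under a $t$-elimination-type order and invoke that $\dim_K K[\x]/\initial_\w(\Ker\phi_\bsxi)$ in each $\w$-degree equals $\#\{$ways to hit that degree in $\Semi(A)\}$, which forces equality with $\Ker\phi_{\initial_t(\bsxi)}$ exactly when $\Semi(\ord_t(\bsxi))=\Semi(A)$. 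Everything else is bookkeeping with Proposition-Definition \ref{local reduction} and Lemma \ref{lift ker}.
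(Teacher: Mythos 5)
Your implications $(1)\Rightarrow(2)\Rightarrow(3)$ and $(3)\Rightarrow(4)$ are fine and essentially the paper's: for $(3)\Rightarrow(4)$ the inclusion $\initial_\w(\Ker\phi_\bsxi)\subset\Ker\phi_{\initial_t(\bsxi)}$ is Theorem \ref{initial in ker}, and since $\initial_\w(\Ker\phi_\bsxi)$ is by definition an ideal, Lemma \ref{lift ker} applied to the binomial generators already gives the reverse inclusion, so the ``reorganization'' worry is vacuous. The genuine gap is in $(4)\Rightarrow(1)$. Your pivot is the claim that $\initial_\w(\Ker\phi_\bsxi)$ is the defining ideal of $K[\initial_t(A)]$, justified by asserting that $K[\x]/\initial_\w(\Ker\phi_\bsxi)$ has in each $\w$-degree the same dimension as $K[t^i\mid i\in\Semi(A)]$. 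That assertion is not a free-standing fact: it is exactly the equivalence $(1)\Leftrightarrow(4)$ you are trying to prove, and as a general statement it is false. Take $\bsxi=(t^4,t^6+t^7)$ with $\ch K\neq 2$, so $\w=(4,6)$; here $\Ker\phi_\bsxi=\langle F\rangle$ is principal with $\initial_\w(F)=(y^2-x^3)^2$, hence $\initial_\w(\Ker\phi_\bsxi)=\langle (y^2-x^3)^2\rangle$, whose quotient has no nonzero element of $\w$-degree $13$ (and is two-dimensional in degree $24$), whereas $13=\ord_t(\xi_2^2-\xi_1^3)\in\Semi(A)$, so $K[\initial_t(A)]$ is one-dimensional in degree $13$ (and in degree $24$). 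So the identification you want holds precisely when $\bsxi$ is already a local SAGBI basis; your parenthetical ``equivalently, one checks $\Semi(\ord_t(\bsxi))=\Semi(A)$ iff the two ideals are equal'' restates the theorem rather than proving it, and the Hilbert-function route is circular.

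What is needed, and what the paper actually does for $(4)\Rightarrow(1)$, is a direct argument producing, for each $0\neq\eta\in A$, a representative whose initial form survives: among all $f\in K\llbracket\x\rrbracket$ with $f(\bsxi)=\eta$, choose one with $\ord_\w(f)$ maximal (possible since $\ord_\w(f)\le\ord_t(\eta)<\infty$ and the values are integers). If $\initial_\w(f)\in\Ker\phi_{\initial_t(\bsxi)}$, then by $(4)$ there is $g\in\Ker\phi_\bsxi$ with $\initial_\w(g)=\initial_\w(f)$, and $f-g$ represents $\eta$ with strictly larger $\ord_\w$, contradicting maximality. Hence $\initial_\w(f)\notin\Ker\phi_{\initial_t(\bsxi)}$, so $\initial_t(\eta)=\initial_\w(f)(\initial_t(\bsxi))\in K[\initial_t(\bsxi)]$, i.e.\ $\Semi(A)=\Semi(\w)$, which is $(1)$. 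Replacing your Hilbert-function claim by this maximality argument repairs the proof.
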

\begin{proof}
(1) $\Rightarrow$ (2) immediately follows from the definition of local SAGBI bases and local reduction. 
(2) $\Rightarrow$ (3) is obvious. 
(3) $\Rightarrow$ (4) follows from Lemma \ref{initial in ker} and Lemma \ref{lift ker}. 

(4) $\Rightarrow$ (1): 
Let $\eta\in A$, and write $f(\bsxi)=\eta$ for $f\in K\llbracket \x\rrbracket$. 
Then $\ord_\w(f)\le \ord_t(\eta)$. 
Hence we may take $f\in K\llbracket \x\rrbracket$ so that $\ord_\w(f)$ is maximal among the power series satisfying $f(\bsxi)=\eta$. 
We claim that $\initial_\w(f)\not\in \Ker(\phi_{\initial_t(\bsxi)})$. 
Assume, to the contrary, that $\initial_\w(f)\in \Ker(\phi_{\initial_t(\bsxi)})$. 
By the assumption $\Ker(\phi_{\initial_t(\bsxi)})=\initial_\w(\Ker\phi_\bsxi)$, 
there exists $g\in \Ker\phi_\bsxi$ such that $\initial_\w(f)=\initial_\w(g)$. 
Then $\ord_\w(f-g)>\ord_\w(f)$ and $(f-g)(\bsxi)=f(\bsxi)-g(\bsxi)=f(\bsxi)=\eta$. 
This is a contradiction. 
Thus $\initial_\w(f)\not\in \Ker(\phi_{\initial_t(\bsxi)})$. 
Therefore $\initial_t(\eta)=\initial_t(f(\bsxi))=\initial_\w(f)(\initial_t(\bsxi))\in K[\initial_t(\bsxi)]$. 
This proves that $\bsxi$ is a local SAGBI basis of $A$. 
\end{proof}
\begin{Remark}\label{implementation}
Even if all the components of $\bsxi$ are polynomials, $\Ker\phi_\bsxi$ is not generated by polynomials in general. 
For example, let $\bsxi=(t+t^2, t^2+t^3)$. 
The defining ideal of $K[t+t^2, t^2+t^3]$ is generated by $F(x,y)=y^2-x^3+xy$. 
If one regard $F$ as a power series, then $\bw(F)=(2,3)$, and $F$ has two tropisms $(1,1)$ and $(1,2)$. 
Thus $F$ is factors as $F=F_1F_2$ with $\bw(F_1)=(1,2)$ and $\bw(F_2)=(1,1) $, and $F_1$ is the generator of $\Ker\phi_\bsxi$. 
As $F_1$ is not divisible by $F$, $F_1$ is not a polynomial. 

One can compute a local SAGBI basis of $K\llbracket \bsxi\rrbracket$ in the following manner: 
Let $G$ be a system of binomial generators of $\Ker(\phi_{\initial_t(\bsxi)})$, 
and compute the remainder of $f(\bsxi)$ on local reduction by $\bsxi$ for all $f\in G$ parallelly. 
If the remainder of  $f(\bsxi)$ is zero for all $f\in G$, then $\bsxi$ is a local SAGBI basis of $K\llbracket \bsxi\rrbracket$. 
If there exists a non-zero remainder $\eta$ of $f(\bsxi)$ for some $f\in G$, 
replace $\bsxi$ by $(\bsxi,\eta)$. Then $\Semi(\ord_t(\bsxi))$ become strictly larger. 
This can be done in finite time. If $\gcd(\ord_t(\bsxi))=1$, then the remainders on local reduction by $\bsxi$ is computable in finite time, 
and if $\gcd(\ord_t(\bsxi))\neq 1$, then $\Ker \phi_{\initial_t(\bsxi)}\neq \initial_\w(\Ker \phi_\bsxi)$ and thus eventually one obtain a non-zero 
remainder of  $f(\bsxi)$ for some $f\in G$. 
Repeating this procedure, one eventually obtain a local SAGBI basis of $K\llbracket \bsxi\rrbracket$ by Lemma \ref{acc for semigroup}. 
\end{Remark}
We will give a variant of the above procedure for computing local SAGBI bases which we need in the next subsection. 
\begin{Algorithm}[Algorithm for computing local SAGBI bases]\label{algorithm for local SAGBI basis}
The following algorithm computes a local SAGBI basis of an irreducible algebroid curve $A=K\llbracket \bsxi\rrbracket \subset K\llbracket t\rrbracket$ 
where $K\llbracket t\rrbracket$ is the integral closure of $A$. \\
{\bf Input:} $\bsxi=(\xi_1,\dots,\xi_r)$, $0\neq \xi_i\in tK\llbracket t\rrbracket$. \\
{\bf Output:} $\bsxi'=(\xi_1,\dots,\xi_{r'})$, $r'\ge r$, a local SAGBI basis of $K\llbracket \bsxi\rrbracket$. 
\begin{enumerate}[1{:~}]
\item $\x$\subst $(x_1,\dots,x_r)$, $\w$\subst $\ord_t(\bsxi)$. 
\item \pscwhile{$\initial_\w(\Ker \phi_\bsxi)\neq\Ker \phi_{\initial_t(\bsxi)}$}. 
\item \quad Take $\x^\a-\alpha\x^\b\in \Ker \phi_{\initial_t(\bsxi)}\backslash \initial_\w(\Ker \phi_\bsxi)$, $N$\subst ${\ord_t(\bsxi^\a-\alpha\bsxi^\b)}$, 
			$\eta$\subst $\bsxi^\a-\alpha\bsxi^\b$. 
\item \quad \pscwhile{$N\in \Semi(\w)$}
\item \quad\quad Take $\c\in \N^r$ and $\beta\in K^\times$ such that $\initial_t(\eta)=\beta\initial_t(\bsxi^\c)$. 
\item \quad\quad $N$\subst ${\ord_t(\eta-\beta \bsxi^\c)}$, $\eta$\subst $\eta-\beta \bsxi^\c$. 
\item \quad \pscendwhile
\item \quad $\bsxi$\subst $(\bsxi,\eta)$, $\x$\subst $(\x,x_{r+1})$, $\w$\subst $(\w,N)$, $r$\subst $r+1$. 
\item \pscendwhile
\item \pscreturn $\bsxi$ 
\end{enumerate}
\end{Algorithm}
\begin{proof}
We give a proof of the correctness and of this algorithm. 
The while loop from Line 4 to 7 computes the local reduction of $\bsxi^\a-\alpha\bsxi^\b$ by $\bsxi$. 
The remainder of $\bsxi^\a-\alpha\bsxi^\b$ on local reduction by $\bsxi$ is not zero by Lemma \ref{lift ker}.  
Thus the while loop from Line 4 to 7 terminates in finite time. 
When the while loop from Line 2 to 9 completes, the semigroup $\Semi(\w)$ become larger. 
Hence this while loop terminates in finite time by Lemma \ref{acc for semigroup}. 
Therefore we eventually have $\initial_\w(\Ker \phi_\bsxi)=\Ker \phi_{\initial_t(\bsxi)}$, 
and thus the output of this algorithm is a local SAGBI basis of $A$ by Theorem \ref{SB criterion}. 
\end{proof}
\subsection{Algorithm for computing value-semigroups of irreducible algebroid curves}
Let $\p\subset K\llbracket \x\rrbracket$ be a one-dimensional prime ideal. 
The purpose of this subsection is to give an algorithm for computing $\Semi(\p)$. 

Since we can write $A:=K\llbracket \x\rrbracket/\p \cong K\llbracket \bsxi\rrbracket\subset K\llbracket t\rrbracket$, 
where $K\llbracket t\rrbracket$ is the integral closure of $A$, 
one can compute $\Semi(A)=\Semi(\p)$ by computing a local SAGBI basis of $K\llbracket \bsxi\rrbracket$. 
However, there is a problem that $\bsxi$ is not easy to compute, and has infinitely many terms in general. 
We will present an algorithm for computing the preimage of a local SAGBI basis of $K\llbracket \bsxi\rrbracket$ in $K\llbracket \x\rrbracket$ without computing $\bsxi$. 
To do this, it is enough to give a method to compute the local reduction of $\eta=f(\bsxi)\in K\llbracket t\rrbracket$ by $\bsxi$ without knowing $\bsxi$. 
\begin{Observation}\label{without bsxi}
In each step on local reduction, for $h(\bsxi)\in K\llbracket t\rrbracket$, we have to find $\c\in\N^r$ and $\beta\in K^\times$ such that 
$\ord_t(h(\bsxi))=\c\cdot\ord_t(\bsxi)$ and $\initial_t(h(\bsxi))=\beta \initial_t(\bsxi^\c)$. 
By Lemma \ref{l=ord}, $\int(h;\p)=\ord_t(h(\bsxi))$ and $\ord_t(\bsxi)=\bw(\p)$. 
Hence $\ord_t(h(\bsxi))=\c\cdot\ord_t(\bsxi)$ is equivalent to 
$\int(h;\p)=\c\cdot\bw(\p)$, and thus 
one can find the $\c$ without knowing $\bsxi$. 
The $\beta$ is the unique element satisfying $\ord_t(h(\bsxi)-\beta\bsxi^\c)>\ord_t(h(\bsxi))$, equivalently 
$\int(h-\beta \x^\c;\p)>\int(h;\p)$. 
One can find this $\beta\in K^\times$ by computing a standard basis of $I+\langle f-a\x^c\rangle$ with a parameter $a$ 
in a similar way to comprehensive Gr\"obner basis \cite{W} 
(in the special case, one can compute $\beta$ more easily. See Observation \ref{matrix method}). 
Hence one can find $\c$ and $\beta$ without knowing $\bsxi$. 
\end{Observation}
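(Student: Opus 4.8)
The plan is to verify the three computational assertions packed into the Observation — recoverability of the exponent vector $\c$, the characterization of the scalar $\beta$, and the parametric detection of $\beta$ — after translating every order computation in $K\llbracket t\rrbracket$ into an intersection-number computation in $K\llbracket\x\rrbracket$ by means of Lemma~\ref{l=ord}. First I would set up that dictionary. The homomorphism $\phi_\bsxi$ has kernel $\p$, so it induces $A=K\llbracket\x\rrbracket/\p\cong K\llbracket\bsxi\rrbracket\subset K\llbracket t\rrbracket$; since $K\llbracket t\rrbracket$ is the integral closure of $A$, Lemma~\ref{l=ord} (applied with $k'=k=K$, hence $[k':k]=1$) gives
\[
\int(g;\p)=\dim_K A/g(\bsxi)A=\ord_t(g(\bsxi))\qquad\text{for every }g\in K\llbracket\x\rrbracket\setminus\p .
\]
In particular $\ord_t(\bsxi)=\bw(\p)$ and $\ord_t(\bsxi^\c)=\c\cdot\bw(\p)$; and since $\initial_t(\xi_i)=\alpha_i t^{w_i}$ with $\alpha_i\in K^\times$ we get $K[\initial_t(\bsxi)]=K[t^{w_1},\dots,t^{w_r}]$, whose $t$-monomials are exactly $\{t^N\mid N\in\Semi(\bw(\p))\}$.

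Next I would treat the recovery of $\c$. A step of the local reduction of Proposition-Definition~\ref{local reduction} applied to $h(\bsxi)$ that calls for a monomial substitution is precisely one in which $\initial_t(h(\bsxi))\in K[\initial_t(\bsxi)]$, equivalently $N:=\ord_t(h(\bsxi))=\int(h;\p)$ belongs to $\Semi(\bw(\p))$; then an exponent vector $\c\in\N^r$ with $\c\cdot\bw(\p)=N$ exists, and Lemma~\ref{is in semigroup} produces one — form a $t$-elimination Gr\"obner basis $G$ of $\langle x_i-t^{w_i}\mid 1\le i\le r\rangle\subset K[t,\x]$, reduce $t^N$ modulo $G$, and read $\x^\c$ off the remainder. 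The only data used are $\bw(\p)=(\int(x_1;\p),\dots,\int(x_r;\p))$ and $N=\int(h;\p)$, both furnished by standard-basis computations in $K\llbracket\x\rrbracket$ (the colengths are finite because $\p$ is a one-dimensional prime and $h\notin\p$); $\bsxi$ does not enter.

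Then I would pin down $\beta$. Having chosen $\c$ so that $\ord_t(\bsxi^\c)=N$, write $\initial_t(h(\bsxi))=\gamma t^{N}$ and $\initial_t(\bsxi^\c)=\delta t^{N}$ with $\gamma,\delta\in K^\times$. For a scalar $a$,
\[
h(\bsxi)-a\,\bsxi^\c=(\gamma-a\delta)\,t^{N}+(\text{terms of order}>N),
\]
so $\ord_t(h(\bsxi)-a\,\bsxi^\c)=N$ unless $a=\gamma/\delta=:\beta$, in which case it is $>N$, and $\beta$ is the unique scalar with this property. Through the dictionary this says $\int(h-a\x^\c;\p)=N$ for all $a\neq\beta$ while $\int(h-\beta\x^\c;\p)>N$; equivalently, $\beta$ is the unique value of the parameter $a$ at which the colength of $\p+\langle h-a\x^\c\rangle$ jumps above its generic value $N$. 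Such an exceptional value is exactly what a comprehensive standard basis of $\p+\langle h-a\x^\c\rangle$ over the parameter ring $K[a]$ exposes, in the spirit of \cite{W}: the open stratum of the parameter line carries an ideal of colength $N$, and $\beta$ lies on the complementary stratum, cut out by the vanishing of the leading coefficient $\gamma-a\delta$, a single polynomial linear in $a$. Combining this with the previous paragraph yields $\c$ and $\beta$ at every reduction step purely from $\p$ and $h$, which is the content of the Observation.

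The step I expect to be the main obstacle is making this parametric determination of $\beta$ fully rigorous in the local (power-series) setting rather than the polynomial one: one must run the comprehensive computation with a local term order (Mora's tangent-cone algorithm) so that the relevant colengths are the intended local ones, check that the generic colength is genuinely the constant $N$ (which the isomorphism with $K\llbracket t\rrbracket$ guarantees), and — since $h$ may be a genuine power series — truncate $h$ to finitely many terms beforehand, which is legitimate because $\int(h;\p)<\infty$ forces only boundedly many terms of $h$ to affect $\p+\langle h-a\x^\c\rangle$ up to the order needed to detect the jump at $N$. These are routine adaptations of \cite{W}; in the special case that arises most often, the shortcut of Observation~\ref{matrix method} avoids the comprehensive machinery altogether.
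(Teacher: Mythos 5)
Your argument is correct and follows essentially the same route as the paper: you use Lemma \ref{l=ord} to translate $t$-orders into intersection numbers (so $\ord_t(\bsxi)=\bw(\p)$ and $\int(h;\p)=\ord_t(h(\bsxi))$), recover $\c$ via Lemma \ref{is in semigroup}, characterize $\beta$ as the unique parameter value where the colength of $\p+\langle h-a\x^\c\rangle$ jumps, and detect it by a comprehensive standard-basis computation as in \cite{W}. Your added remarks on local orders and truncation are reasonable implementation details but not a departure from the paper's reasoning.
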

\begin{Algorithm}[Algorithm for computing value-semigroups]\label{Semi(p)}
The following algorithm computes $\Semi(\p)$ for a one-dimensional prime ideal $\p$. \\
{\bf Input:} a one-dimensional prime ideal $\p\subset K\llbracket \x \rrbracket=K\llbracket x_1,\dots,x_r \rrbracket$ with $x_i\not\in \p$ for all $i$. \\
{\bf Output:} ($\p'$, $\w$) where $\p'\subset K\llbracket x_1,\dots,x_{r'} \rrbracket$, $r'\ge r$, and $\w=\bw(\p')$ 
such that $K\llbracket \x\rrbracket/\p\cong K\llbracket x_1,\dots,x_{r'} \rrbracket/\p'$ and $\Semi(\p)=\Semi(\w)$. 
\begin{enumerate}[1{:~}]
\item $\x$\subst $(x_1,\dots,x_r)$, $\w$\subst $\bw(\p)$. 
\item \pscwhile{$\initial_\w(\p)\neq \sqrt{\initial_\w(\p)}$}. 
\item \quad Take $f=\x^\a-\alpha \x^\b\in \sqrt{\initial_\w(\p)}\backslash \initial_\w(\p)$,  $N$\subst ${\int(f;\p)}$. 
\item \quad \pscwhile{$N \in \Semi(\w)$}
\item \quad\quad Take $\c\in \N^r$ such that $N=\w\cdot \c$. 
\item \quad\quad Take the unique $\beta\in K^\times$ such that $\int(f-\beta \x^\c;\p)>\int(f;\p)$. 
\item \quad\quad $N$\subst ${\int(f-\beta \x^\c;\p)}$, $f$\subst $f-\beta \x^\c$. 
\item \quad \pscendwhile
\item \quad $\x$\subst $\p$\subst $\langle \p, x_{r+1}-f\rangle$, $(\x,x_{r+1})$, $\w$\subst $(\w,N)$, $r$\subst $r+1$. 
\item \pscendwhile
\item \pscreturn ($\p$, $\w$). 
\end{enumerate}
\end{Algorithm}
\begin{proof}
Set $A:=K\llbracket \x\rrbracket/\p \cong K\llbracket \bsxi\rrbracket\subset K\llbracket t\rrbracket$ 
where $K\llbracket t\rrbracket$ is the integral closure of $A$. 
Then $\p=\Ker \phi_\bsxi$ and  $\sqrt{\initial_\w(\p)}=\initial_\w(\Ker \phi_\bsxi)$ by Theorem \ref{initial in ker}. 
The ideal $\p$ changes only at Line 9, and the residue class ring of $\p$ does not change. 
Note that $\Ker \phi_{\bsxi'}=\langle \p, x_{r+1}-f\rangle$ where $\bsxi'=(\bsxi, f(\bsxi))$. 
Hence, by Observation \ref{without bsxi}, 
each step of this algorithm completely corresponds to that of Algorithm \ref{algorithm for local SAGBI basis} with $\bsxi$ as the input. 
Thus $\Semi(\p)=\Semi(\p')=\Semi(\bw(\p'))$ is satisfied for the output $\p'$ by the definition of local SAGBI basis. 
\end{proof}
Note that one can decide whether $N\in \Semi(\w)$ or not, and find $\c\in \N^r$ such that $N=\c\cdot\w$ if $N\in \Semi(\w)$ at the same time 
by using Lemma \ref{is in semigroup}.  

It is not hard to show that if $\initial_\w(I)$ is prime for some $\w\in \N_+^r$, then $I$ is also prime. 
Thus one can use Algorithm \ref{Semi(p)} to obtain an evidence of the primeness of a prime ideal. 
\begin{Example}
Let $a_1,a_2\in \N_+$ with $\gcd(a_1,a_2)=1$. 
Take $2\le d\in \N_+$ and $b=c_1a_1+c_2a_2\in \Semi(a_1,a_2)$, $c_1, c_2\in \N_+$, such that $b \not\in \Semi(da_1,da_2)$ and $b>da_1a_2$. 
Let $F(x,y)=(y^{a_1}-x^{a_2})^d-x^{c_1}y^{c_2}\in K\llbracket x,y\rrbracket$. 
Then $\bw(F)=(da_1,da_2)$, and $\initial_{\bw(F)}(F)=(y^{a_1}-x^{a_2})^d$. 
Let $g=y^{a_1}-x^{a_2}$. Then 
\begin{eqnarray*}
\int(g;F) &=&\dim_K K\llbracket x,y \rrbracket/\langle y^{a_1}-x^{a_2}, x^{c_1}y^{c_2}\rangle
=\dim_K K\llbracket t^{a_1},t^{a_2}\rrbracket /\langle t^{c_1a_1+c_2a_2}\rangle \\
&=&c_1a_1+c_2a_2=b\not\in \Semi(da_1,da_2). 
\end{eqnarray*}
Let $J=\langle F, z-g\rangle =\langle (y^{a_1}-x^{a_2})^d-x^{c_1}y^{c_2}, z-(y^{a_1}-x^{a_2}) \rangle =
\langle z^d-x^{c_1}y^{c_2}, (y^{a_1}-x^{a_2})-z \rangle\subset K\llbracket x,y,z\rrbracket$. 
Then $\bw(J)=(da_1,da_2,b)$. 
Since $\initial_{\bw(J)}(J)=\langle z^d-x^{c_1}y^{c_2}, y^{a_1}-x^{a_2} \rangle=\Prim(da_1,da_2,b)$, 
$F$ is irreducible and $\Semi(F)=\Semi(da_1,da_2,b)$. 
\end{Example}
\begin{Example}
Let $a,b,c\in \N_+$ such that $c$ is odd, and $20<4a+6b+5c$. Let 
\[
I=\langle x^3-y^2, (z^2-xy)^2-x^ay^bz^c\rangle \subset K\llbracket x,y,z\rrbracket. 
\]
Then $\bw(I)=(8,12,10)$, $\initial_{\bw(I)}(I)=\langle x^3-y^2, (z^2-xy)^2\rangle$. 
Let $g=z^2-xy$. Then $\int(g;I)=4a+6b+5c\not\in \Semi(8,12,10)$ as $4a+6b+5c$ is odd. 
Let $J=\langle I, u-g\rangle \subset K\llbracket x,y,z,u\rrbracket$. 
Then $\bw(J)=(8,12,10,4a+6b+5c)$, and $\gcd(\bw(J))=\gcd(2,5c)=1$. 
Since it holds that $\initial_{\bw(J)}(J)=\langle x^3-y^2, u^2-x^ay^bz^c,z^2-xy\rangle=\Prim(\bw(J))$, 
we conclude that $I$ is prime and $\Semi(I)=\Semi(8,12,10,4a+6b+5c)$. 
\end{Example}
If one apply Algorithm \ref{Semi(p)} to a non-prime ideal $I$, then some errors would occur. 
For example, the uniqueness of $\beta$ in Line 6 fails. 
One may regard it as an evidence of the non-primeness of $I$, 
and this lead us to an algorithm for deciding irreducibility of algebroid curves. 
\section{Algorithm for deciding irreducibility of reduced algebroid curves}
We will present an algorithm for proving irreducibility of algebroid curves over an algebraically closed field. 
\subsection{Parametric intersection numbers}
Let $I\subset K\llbracket \x\rrbracket$ be an unmixed ideal of dimension one, 
and take $f,g\in K\llbracket \x\rrbracket$ such that $\inf(f;I)=\int(g;I)$. 
We will investigate how the value $\int(f-\alpha g;I)$ varies as $\alpha\in K^{\times}$ changes. 
If  $f$ and $g$ are polynomials and $I$ is generated by polynomials, 
one can compute this by using comprehensive Gr\"obner basis \cite{W} and Lazard's homogenization technique \cite{Lazard}. 

In case where $I$ is prime, we obtain the next lemma by Lemma \ref{l=ord}. 
\begin{Lemma}\label{prime case parametric}
Let $\p\subset K\llbracket \x\rrbracket$ be a one-dimensional prime ideal, and $f,g\in K\llbracket \x\rrbracket$ with $\int(f;\p)=\int(g;\p)<\infty$. 
Then there exists $\alpha\in K^\times$ such that $\int(f-\alpha g;\p)>\int(f;\p)$ and $\int(f-\beta g;\p)=\int(f;\p)$ for $\beta\neq \alpha$. 
\end{Lemma}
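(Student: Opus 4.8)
The plan is to reduce everything to the integral closure via Lemma \ref{l=ord}. Write $A = K\llbracket\x\rrbracket/\p$ and let $B = K\llbracket t\rrbracket$ be its integral closure (so $B$ is a DVR with uniformizer $t$, using that $K$ is algebraically closed). Let $\bar f, \bar g \in A \subset B$ be the images of $f$ and $g$. By Lemma \ref{l=ord}, for any $h \in A$ we have $\int(h;\p) = \dim_K A/hA = \ord_t(\bar h)$, since the coefficient field of $B$ is $K$ itself so $[K':K]=1$. In particular $\ord_t(\bar f) = \ord_t(\bar g) =: n < \infty$, so both $\bar f$ and $\bar g$ are nonzero elements of $B$ of the same $t$-adic order $n$.

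Next I would pass to leading terms in $B$. Write $\initial_t(\bar f) = a t^n$ and $\initial_t(\bar g) = b t^n$ with $a, b \in K^\times$. For $\gamma \in K$, $\bar f - \gamma \bar g$ has $t$-expansion beginning with $(a - \gamma b)t^n$; hence $\ord_t(\bar f - \gamma \bar g) > n$ precisely when $a - \gamma b = 0$, i.e. for the single value $\alpha := a/b \in K^\times$, and $\ord_t(\bar f - \gamma \bar g) = n$ for every $\gamma \ne \alpha$. Translating back through Lemma \ref{l=ord}: $\int(f - \alpha g;\p) = \ord_t(\bar f - \alpha \bar g) > n = \int(f;\p)$, while $\int(f - \beta g;\p) = \ord_t(\bar f - \beta \bar g) = n = \int(f;\p)$ for all $\beta \ne \alpha$. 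This is exactly the claimed dichotomy, with $\alpha \in K^\times$ (note $\alpha \ne 0$ since $a \ne 0$).

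There is essentially no hard step here; the only things to be careful about are that $\alpha$ lands in $K^\times$ rather than just $K$ (which follows from $a,b \in K^\times$), and that $\bar f - \alpha \bar g$, while possibly zero, still satisfies the inequality under the convention $\ord_t(0) = \infty$ from the definition of $\ord_t$, so the statement $\int(f-\alpha g;\p) > \int(f;\p)$ holds even in that degenerate case (where $\int(f-\alpha g;\p) = \infty$, meaning $f - \alpha g \in \p$). One could alternatively argue directly in $A$ without invoking $B$, but routing through the integral closure makes the leading-term bookkeeping transparent, and Lemma \ref{l=ord} has already done the work of identifying $\int(\,\cdot\,;\p)$ with $\ord_t$.
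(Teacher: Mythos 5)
Your proof is correct and follows the same route the paper intends: the paper derives this lemma directly from Lemma \ref{l=ord} by identifying $\int(\,\cdot\,;\p)$ with $\ord_t$ in the integral closure $K\llbracket t\rrbracket$, and your leading-coefficient cancellation argument (with the unique $\alpha=a/b\in K^\times$, including the degenerate case $f-\alpha g\in\p$) is exactly the bookkeeping that justification requires.
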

One can use Lemma \ref{prime case parametric} for testing a one-dimensional ideal to be prime. 
If $I$ is decided to be not prime by Lemma \ref{prime case parametric}, 
one can construct $J$ such that its residue class ring is isomorphic to that of $I$, and $J$ has at least two tropisms. 
\begin{Theorem}\label{test 1}
Let $I\subset K\llbracket \x\rrbracket$ be an unmixed ideal of dimension one, 
and $f,g\in K\llbracket\x\rrbracket$ with $\int(f;I)=\int(g;I)<\infty$. 
Then the following holds. 
\begin{enumerate}
\item 
$\int(f-\alpha g;I)\le \int(f;I)$ for generic $\alpha\in K^\times$. 
\item 
If $\int(f-\alpha g;I)<\int(f;I)$ for generic $\alpha\in K^\times$, 
then 
\[
J=\langle I,x_{r+1}-f,x_{r+2}-g\rangle \subset K\llbracket x_1,\dots,x_r,x_{r+1},x_{r+2}\rrbracket
\]
has at least two tropisms. 
\item 
If $\int(f-\alpha g;I)=\int(f;I)$ for generic $\alpha\in K^\times$ and there exist $\beta_1,\beta_2\in K^\times$, $\beta_1\neq\beta_2$, 
such that $\int(f-\beta_ig;I)>\int(f;I)$ for $i=1,2$, then 
\[
J=\langle I,x_{r+1}-(f-\beta_1g), x_{r+1}-(f-\beta_2g)\rangle \subset K\llbracket x_1,\dots,x_r,x_{r+1},x_{r+2}\rrbracket
\]
has at least two tropisms. 
\item 
If $h\not\in \sqrt{I}$ and $\int(h;I)=\infty$, 
then 
\[
J=\langle I,x_{r+1}-h\rangle \subset K\llbracket x_1,\dots,x_r,x_{r+1}\rrbracket
\]
has at least two tropisms. 
\end{enumerate}
\end{Theorem}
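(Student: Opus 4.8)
The plan is to reduce all four statements to a computation on the finitely many minimal primes of $I$. Write $\Ass I=\Min I=\{\p_1,\dots,\p_m\}$ (these coincide since $I$ is unmixed), set $A=K\llbracket\x\rrbracket/I$ and $e_i=\ell(A_{\p_i})$, and for $\eta\in K\llbracket\x\rrbracket$ abbreviate $a_i(\eta)=\int(\eta;\p_i)$. By Lemma~\ref{rmk on wI}, $\int(\eta;I)=\sum_i e_i\,a_i(\eta)$, and by Lemma~\ref{l=ord} (applied over $K$, so all residue degrees are $1$), $a_i(\eta)=\ord_t\bigl(\eta(\bsxi^{(i)})\bigr)$, where $\bsxi^{(i)}$ denotes the image of $\x$ in the normalization $K\llbracket t\rrbracket$ of $K\llbracket\x\rrbracket/\p_i$. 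The hypotheses force $f,g$ (resp.\ $h$ in (4)) into the maximal ideal: if e.g.\ $f$ were a unit then $\int(f;I)=0$, hence $g$ a unit and $f-\alpha g$ a unit for all but one $\alpha$, which makes the hypotheses of (2) and (3) vacuous, while $\int(h;I)=\infty$ plainly forces $h\in\m$; hence every intersection number $a_i(\cdot)$ occurring below is $\ge 1$. For (1), set $a_i=a_i(f)$ and $b_i=a_i(g)$, all finite. The leading terms of $f(\bsxi^{(i)})$ and $\alpha\,g(\bsxi^{(i)})$ can cancel only for a single $\alpha$, and only if $a_i=b_i$; hence for $\alpha$ outside a finite set one has $a_i(f-\alpha g)=\min(a_i,b_i)$ for every $i$, and therefore $\int(f-\alpha g;I)=\sum_i e_i\min(a_i,b_i)\le\sum_i e_i a_i=\int(f;I)$.

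For (2)--(4) I use a common mechanism. Substituting the added variables back shows $K\llbracket x_1,\dots,x_{r'}\rrbracket/J\cong K\llbracket\x\rrbracket/I$, so $J$ is again unmixed of dimension one, its minimal primes are $\P_i=\langle\p_i,\,x_{r+1}-h^{(1)},\dots\rangle$ (where $h^{(1)},\dots$ are the power series one adjoins to form $J$), and $\bw(\P_i)=\bigl(\bw(\p_i),\,a_i(h^{(1)}),\dots\bigr)$ since $\int(x_{r+j};\P_i)=\int(h^{(j)};\p_i)$. By Theorem~\ref{T(p)=w(p)} and the remark following Lemma~\ref{intersection of ideals}, $\T_{\rm loc}(J)=\bigcup_i\pR\cdot\bw(\P_i)$ is a finite union of rays, and the tropisms of $J$ are precisely the primitive integer generators of these rays; since rescaling a vector preserves the set of its $\infty$-entries and the ratios of its finite entries, \emph{$J$ has at least two tropisms as soon as two of the $\bw(\P_i)$ are non-proportional}. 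In (4): $\int(h;I)=\infty$ forces $h\in\p_i$, hence $a_i(h)=\infty$, for some $i$, while $h\notin\sqrt I=\bigcap_j\p_j$ forces $h\notin\p_j$, hence $a_j(h)<\infty$, for some $j$; then the last entries of $\bw(\P_i)$ and $\bw(\P_j)$ are $\infty$ and finite, so these vectors are non-proportional. In (2): the generic value $\sum_i e_i\min(a_i,b_i)$ from (1) being strictly $<\int(f;I)=\sum_i e_i a_i$ means $b_i<a_i$ for some $i$, and then the equality $\sum_i e_i a_i=\sum_i e_i b_i$ forces $a_j<b_j$ for some $j$; the last two entries of $\bw(\P_i)$ and $\bw(\P_j)$ are $(a_i,b_i)$ with $a_i>b_i$ and $(a_j,b_j)$ with $a_j<b_j$, all finite positive, so their ratios straddle $1$ and the two vectors are non-proportional.

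For (3): the generic value $\sum_i e_i\min(a_i,b_i)$ being equal to $\sum_i e_i a_i$ forces $a_i=b_i=:n_i$ for all $i$. For each $i$ there is a unique $\gamma_i\in K^\times$, namely the ratio of the leading coefficients of $f(\bsxi^{(i)})$ and $g(\bsxi^{(i)})$, with $a_i(f-\gamma_i g)>n_i$, whereas $a_i(f-\beta g)=n_i$ whenever $\beta\ne\gamma_i$; hence $\int(f-\beta g;I)>\int(f;I)$ holds exactly for $\beta\in\{\gamma_1,\dots,\gamma_m\}$. The hypothesis $\beta_1\ne\beta_2$ thus gives indices $i_1\ne i_2$ with $\gamma_{i_1}=\beta_1$ and $\gamma_{i_2}=\beta_2$, and then the two extra entries of $\bw(\P_{i_1})$ (recording $f-\beta_1 g$ and $f-\beta_2 g$) are $\bigl(a_{i_1}(f-\beta_1 g),\,n_{i_1}\bigr)$ with the first strictly larger, while those of $\bw(\P_{i_2})$ are $\bigl(n_{i_2},\,a_{i_2}(f-\beta_2 g)\bigr)$ with the second strictly larger; a short case distinction (according to whether these numbers are finite or $\infty$) rules out proportionality, completing all four parts. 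I expect this last part to be the main obstacle: one must pass reliably from ``$\int(f-\beta g;I)$ is generically constant but jumps at two distinct values $\beta_1\ne\beta_2$'' to ``two distinct minimal primes of $I$ supply the jumps'', and then verify non-proportionality of the corresponding $\bw$-vectors uniformly, including the degenerate case where $f-\beta_i g$ lies in some but not all of the $\p_j$, so that an entry of $\bw(\P_{i_k})$ becomes $\infty$; parts (1), (2) and (4) are by comparison routine bookkeeping with orders in the normalizations.
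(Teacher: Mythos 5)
Your proposal is correct and follows essentially the same route as the paper: decompose $I$ into its (minimal) associated primes, use Lemma \ref{rmk on wI} and Lemma \ref{l=ord} to write intersection numbers as $\sum_j \ell(A_{\p_j})\,\ord_t(\phi_j(\cdot))$, compute the generic value of $\int(f-\alpha g;I)$ as $\sum_j \ell(A_{\p_j})\min\{u_j,v_j\}$, and in each of (2)--(4) exhibit two components of $J$ whose $\bw$-vectors are non-proportional, hence (via Theorem \ref{T(p)=w(p)} and Lemma \ref{intersection of ideals}) two tropisms. Your treatment of part (3) via the ratio of leading coefficients is in fact the careful reading of the paper's corresponding step (where the uniqueness of each $\alpha_j$ is phrased, slightly too strongly, as $\phi_j(f)=\alpha_j\phi_j(g)$), so there is no substantive difference.
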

\begin{proof}
Let $I=\q_1\cap\dots\cap\q_l$ be the irredundant primary decomposition, and $\p_j=\sqrt{\q_j}$. 
Note that $\{(\gcd\bw(\p_j))^{-1} \bw(\p_j)\mid 1\le j\le l\}$ is the set of the tropism of $I$ by Theorem \ref{T(p)=w(p)} and Lemma \ref{intersection of ideals}. 
Set $\w=\bw(I)$. 
Let $A=K\llbracket \x\rrbracket/I$ and $A_j=K\llbracket \x\rrbracket/\p_j$. 
We identify the integral closure of $A_j$ with $K\llbracket t\rrbracket$ for all $1\le j\le l$. 
Let $\phi_j:K\llbracket \x\rrbracket\to K\llbracket t\rrbracket$ be 
the composition of the natural surjection $K\llbracket \x\rrbracket\to A_j$ and the inclusion $A_j\hookrightarrow K\llbracket t\rrbracket$. 
Recall that 
\[
\int(h;I)=\sum_{j=1}^l \ell(A_{\p_j}) \cdot\ord_t(\phi_j(h))
\]
for $h\in K\llbracket \x\rrbracket$ by Lemma \ref{rmk on wI} and Lemma \ref{l=ord}. 
We set $u_j=\ord_t(\phi_j(f))$ and $v_j=\ord_t(\phi_j(g))$. 
Then 
\[
\sum_{j=1}^l \ell(A_{\p_j}) \cdot u_j=\int(f;I)=\int(g;I)=\sum_{j=1}^l \ell(A_{\p_j}) \cdot v_j. 
\]
Hence, for generic $\alpha\in K^\times$, it holds that 
\[
\int(f-\alpha g;I)=\sum_{j=1}^l \ell(A_{\p_j}) \cdot \ord_t(\phi_j(f)-\alpha\phi_j(g))=\sum_{j=1}^l \ell(A_{\p_j}) \cdot \min\{u_j,v_j\}. 
\]
Therefore $\int(f-\alpha g;I)=\int(f;I)$ holds for generic $\alpha\in K^\times$ if and only if $u_j=v_j$ for all $1\le j\le l$. 

(1) As $\int(f-\alpha g;I)=\sum_{j=1}^l \ell(A_{\p_j}) \cdot \min\{u_j,v_j\}$, it holds that 
$\int(f-\alpha g;I)\le \int(f;I)$ for generic $\alpha\in K^\times$. 

(2) Since $\int(f-\alpha g;I)<\int(f;I)$ for generic $\alpha\in K^\times$, 
there exists $1\le j\le l$ such that $u_j\neq v_j$. 
Moreover, since $\sum_{j=1}^l \ell(A_{\p_j}) \cdot u_j=\sum_{j=1}^l \ell(A_{\p_j}) \cdot v_j$, 
there exist $1\le j_1, j_2\le l$ such that $u_{j_1}> v_{j_1}$ and $u_{j_2}<v_{j_2}$. 
For $1\le j\le l$, we set 
\[
\p_j' = \langle \p_j,x_{r+1}-f,x_{r+2}-g\rangle \subset K\llbracket x_1,\dots,x_r, x_{r+1}, x_{r+2}\rrbracket. 
\]
Then $\p_1',\dots,\p_l'$ are the associated prime ideals of $J$, and $\bw(\p_j')=(\bw(\p_j),u_j,v_j)$. 
Since $u_{j_1}>v_{j_1}$ and $u_{j_2}<v_{j_2}$, we have 
$\R_+ \cdot \bw(\p_{j_1}')\neq \R_+\cdot \bw(\p_{j_2}')$. 

(3) Since $\int(f-\alpha g;I)=\int(f;I)$ for generic $\alpha\in K^\times$, it holds that $u_j=v_j$ for all $1\le j\le l$. 
Thus, for each $1\le j\le l$, there exists $\alpha_j\in K^\times$ such that $\phi_j(f)=\alpha_j\phi_j(g)$. 
Then $\ord_t(\phi_j(f)-\alpha_j\phi_j(g))>u_j$, and $\ord_t(\phi_j(f)-\beta \phi_j(g))=u_j$ for $\beta\neq \alpha_j$. 
Therefore $\int(f-\beta g;I)>\int(f;I)$ if and only if $\beta=\alpha_j$ for some $1\le j\le l$. 
Hence there exist $1\le j_1,j_2\le l$ such that $\beta_1=\alpha_{j_1}$ and $\beta_2=\alpha_{j_2}$. 
For $1\le j\le l$, we set 
\[
\p_j' = \langle \p_j,x_{r+1}-(f-\beta_1g), x_{r+1}-(f-\beta_2g)\rangle \subset K\llbracket x_1,\dots, x_r, x_{r+1} \rrbracket. 
\]
Then $\p_1',\dots,\p_l'$ are the associated prime ideals of $J$, and 
\begin{eqnarray*}
\bw(\p_{j_1}') &=& (\bw(\p_{j_1}), \ord_t(\phi_{j_1}(f)-\alpha_{j_1}\phi_{j_1}(g)),~ \ord_t(\phi_{j_1}(f))), \\
\bw(\p_{j_2}') &=& (\bw(\p_{j_2}), \ord_t(\phi_{j_2}(f)),~ \ord_t(\phi_{j_2}(f)-\alpha_{j_2}\phi_{j_2}(g))).  
\end{eqnarray*}
As $\ord_t(\phi_{j_1}(f)-\alpha_{j_1}\phi_{j_1}(g))> \ord_t(\phi_{j_1}(f))$ and 
$\ord_t(\phi_{j_2}(f))<\ord_t(\phi_{j_2}(f)-\alpha_{j_2}\phi_{j_2}(g))$, 
we have 
$\R_+ \cdot \bw(\p_{j_1}')\neq \R_+\cdot \bw(\p_{j_2}')$. 

(4) Since $h\not\in \sqrt{I}=\p_1\cap\dots \cap\p_\ell$, there exists $1\le j_1 \le \ell$ such that $h\not\in \p_{j_1}$. 
On the other hand, as $\int(h;I)=\infty$, there there exists $1\le j_2 \le \ell$ such that $h\in \p_{j_2}$. 
For $1\le j\le l$, we set 
\[
\p_j' = \langle \p_j,x_{r+1}-h \rangle \subset K\llbracket x_1,\dots,x_r, x_{r+1}\rrbracket. 
\]
Then $\p_1',\dots,\p_l'$ are the associated prime ideals of $J$, and $\bw(\p_j')=(\bw(\p_j),\ord_t(\phi_j(h)))$. 
Since $h\not\in \p_{j_1}$ and $h\in \p_{j_2}$, it holds that $\ord_t(\phi_{j_1}(h))<\infty$ and $\ord_t(\phi_{j_2}(h))=\infty$. 
Thus $\R_+ \cdot \bw(\p_{j_1}')\neq \R_+\cdot \bw(\p_{j_2}')$. 
\end{proof}
\begin{Observation}\label{matrix method}
Let $A=K\llbracket \x\rrbracket/I$ be a reduced algebroid curve. 
Assume $n:=\int(x_1;I)<\infty$. 
Since $A$ is Cohen-Macaulay, is a free $K\llbracket x_1\rrbracket$-module of rank $n$.  
Let $\prec$ be a term order, and $\w\in \N_+^r$. 
We may regard $\Gamma=\{ \x^\a\not\in \initial_\prec(\initial_\w(I+\langle x_1\rangle))\}$ as a $K\llbracket x_1\rrbracket$-basis of $A$. 
For $f\in K\llbracket \x\rrbracket$, 
we denote by $M_f\in M_n(K\llbracket x_1\rrbracket)$ 
the matrix expression of the multiplication of $f$ on $A$ with respect to the $K\llbracket x_1\rrbracket$-basis $\Gamma$. 
Note that $M_f=f(M_{x_1},\dots,M_{x_r})$. 
As $A$ is a principal ideal domain, we have $\ell_{K\llbracket x_1\rrbracket}(\Coker M_f)=\ord_{x_1}(\det M_f)$ 
(see \cite{Fulton} Lemma A.2.6 for more general results). 
Since $\Coker M_f\cong A/fA$ as $K\llbracket x_1\rrbracket$-modules, we conclude that 
\[
\int(f;I)=\ord_{x_1}(\det M_f)=\ord_{x_1}(\det f(M_{x_1},\dots,M_{x_r})). 
\]
One can see how $\int(f-\alpha g;I)$ varies by computing $\det(M_f- a M_g)$ with a new variable $a$. 
In the special case where $I$ is generated by $h_1,\dots,h_n\in K[\x]_{\langle \x\rangle}$ such that $h_j$ mod $x_1$ is a monomial for all $j$, 
one can compute $M_{x_i}$ mod $x_1^N$ in finite time for any large $N>0$. 

We will see the case of a bivariate power series $F(x,y)\in K\llbracket x,y\rrbracket$ with $n=\ord_y F(0,y)$. 
By Weierstrass preparation theorem, we may assume that 
\[
F(x,y)=y^n+c_1(x)y^{n-1}+\dots+c_{n-1}(x)y+c_n(x)
\]
where $c_i(x)\in K\llbracket x\rrbracket$ with $c_i(0)=0$. 
Since $\langle F(x,y), x\rangle =\langle x,y^n\rangle$, 
$A=K\llbracket x,y \rrbracket/\langle F(x,y)\rangle$ is a free $K\llbracket x\rrbracket$-module with a basis $\{1,y,\dots,y^{n-1}\}$. 
As $M_x$ sends $y^i$ to $x\cdot y^{n-1}$ for $0\le i\le n-1$, $M_x=xE_n$ where $E_n$ is the unit matrix of rank $n$. 
Since $M_y$ sends $y^i$ to $y^{i+1}$ for $0\le i\le n-2$, and $y^{n-1}$ to $-(c_1(x)y^{n-1}+\dots+c_{n-1}(x)y+c_n(x))\in A$, 
$M_y$ is the companion matrix of $F$ as a monic polynomial in the variable $y$. 
\end{Observation}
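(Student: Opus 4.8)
The plan is to reduce every assertion to the elementary-divisor theory of $A=K\llbracket\x\rrbracket/I$ regarded as a module over the complete discrete valuation ring $R:=K\llbracket x_1\rrbracket$. First I would establish the freeness statement and pin down the basis $\Gamma$. Since $A$ is Cohen--Macaulay of dimension one and $A/x_1A$ has finite length (namely $n=\int(x_1;I)<\infty$), the element $x_1$ is a parameter, hence --- $A$ being unmixed, so that its associated primes are the one-dimensional minimal ones and $x_1$ lies in none of them --- a nonzerodivisor on $A$; thus $A$ is a torsion-free finitely generated $R$-module, and as $R$ is a PID it is $R$-free, of rank $\dim_K A/x_1A=n$. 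To see that $\Gamma$ is an $R$-basis it suffices, by Nakayama over the local ring $R$, to check that the reductions modulo $x_1$ of the $n$ monomials in $\Gamma$ form a $K$-basis of $A/x_1A$; this is the assertion that the standard monomials of the monomial ideal $\initial_\prec(\initial_\w(I+\langle x_1\rangle))$ form a $K$-basis of $K\llbracket\x\rrbracket/(I+\langle x_1\rangle)$, which holds because passing from an $\m$-primary ideal to its $\w$-initial ideal, and then to its $\prec$-initial ideal, preserves the colength (flatness of the weight degeneration and of the subsequent Gr\"obner degeneration), while $x_1\in\initial_\prec(\initial_\w(I+\langle x_1\rangle))$ forces every standard monomial to involve only $x_2,\dots,x_r$.

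Having fixed the identification $A\cong R^{\oplus n}$, I would let $M_f\in M_n(R)$ denote the matrix of multiplication by $f\in K\llbracket\x\rrbracket$. Because $f\mapsto M_f$ is additive and multiplicative on polynomials, $M_n(R)$ is $x_1R$-adically complete, and the maximal ideal of the Artinian local ring $A/x_1A$ is nilpotent (so every sufficiently long monomial in $M_{x_1},\dots,M_{x_r}$ is divisible by $x_1$), the series $f(M_{x_1},\dots,M_{x_r})$ converges in $M_n(R)$ and equals $M_f$. By construction $\Coker(M_f\colon R^{\oplus n}\to R^{\oplus n})\cong A/fA$ as $R$-modules. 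If $f$ is a nonzerodivisor on $A$ --- equivalently $\int(f;I)<\infty$ --- then $M_f$ is injective, $\det M_f\neq 0$, and the Smith normal form over the DVR $R$ yields $\ell_R(\Coker M_f)=\ord_{x_1}(\det M_f)$ (this is \cite{Fulton} Lemma A.2.6); since $R/x_1R=K$ we get $\ell_R(A/fA)=\dim_K A/fA=\int(f;I)$, and the displayed identity $\int(f;I)=\ord_{x_1}(\det f(M_{x_1},\dots,M_{x_r}))$ follows. The remaining case, $f\in\p$ for some minimal prime $\p$ of $I$, is disposed of by noting that then $\int(f;I)=\infty$ and $\det M_f=0$, so the identity persists with the convention $\ord_{x_1}(0)=\infty$.

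For the parametric statement I would introduce a fresh indeterminate $a$ over $K$: then $M_{f-ag}=M_f-aM_g$ and $\det(M_f-aM_g)\in K\llbracket x_1\rrbracket[a]$, so that specializing $a=\alpha\in K^\times$ gives $\ord_{x_1}\bigl(\det(M_f-\alpha M_g)\bigr)=\int(f-\alpha g;I)$, which exhibits the generic value of $\int(f-\alpha g;I)$ and the finitely many exceptional $\alpha$ at which the order jumps. Finally, in the bivariate case I would invoke the Weierstrass preparation theorem to replace $F$ by a monic polynomial of degree $n=\ord_y F(0,y)$ in $y$ with non-unit coefficients $c_i(x)\in K\llbracket x\rrbracket$; then $\langle F,x\rangle=\langle x,y^n\rangle$ shows $1,y,\dots,y^{n-1}$ is a $K\llbracket x\rrbracket$-basis of $A$, the relation $M_x=xE_n$ is immediate since $x$ acts as the scalar $x\in R$, and the action $y^i\mapsto y^{i+1}$ for $i<n-1$ together with $y^{n-1}\mapsto-(c_1(x)y^{n-1}+\dots+c_n(x))$ (from the defining relation) identifies $M_y$ with the companion matrix of $F$. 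The step I expect to be the main obstacle is the first paragraph: making rigorous that $\#\Gamma=n$, i.e.\ that neither the $\w$-degeneration nor the subsequent $\prec$-degeneration of the $\m$-primary ideal $I+\langle x_1\rangle$ alters the colength, together with the topological care needed to substitute the (nilpotent-modulo-$x_1$) matrices $M_{x_i}$ into an arbitrary power series; both are standard but should be spelled out.
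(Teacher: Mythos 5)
Your proposal is correct and follows essentially the same route as the paper's own (largely sketched) justification: freeness of $A$ over $K\llbracket x_1\rrbracket$ from Cohen--Macaulayness/nonzerodivisibility of $x_1$, the standard-monomial basis via colength-preserving initial degenerations and Nakayama, $M_f=f(M_{x_1},\dots,M_{x_r})$ by $x_1$-adic convergence, and the length--determinant identity via Smith normal form over the DVR (the paper's citation of \cite{Fulton} Lemma A.2.6), with the bivariate case handled by Weierstrass preparation exactly as in the text. Your added care about finite generation, the convergence of the substituted series, and the $\int(f;I)=\infty$ case are welcome refinements but do not constitute a different argument.
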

\subsection{Main algorithm}
Let $I\subset K\llbracket \x\rrbracket$ be a one-dimensional radical ideal. 
We may assume $\bw(I)\in \N_+^r$ by pre-computation 
(one can decide whether $x_i\in I$ or not by comparing the initial ideals of $I$ and $I+\langle x_i\rangle$). 
Combining Algorithm \ref{Semi(p)} and the sufficient conditions for $I$ to be not prime (Theorem \ref{primeness} (1) and Theorem \ref{test 1}), 
we obtain an algorithm for deciding irreducibility of reduced algebroid curves. 
Concerning Theorem \ref{test 1}, we introduce an function testing the irreducibility of algebroid curves which we use in the main algorithm.  
\begin{Algorithm}\label{parametric_test}
Let $I$ be an unmixed ideal of dimension one, and $f,g \in K\llbracket \x\rrbracket$ with $\int(f;I)=\int(g;I)$. 
We define the function {\bf Parametric\_Test}$(f,g,I)$ whose output is of form $(``result", \mbox{Object})$ as follows: 
\begin{enumerate}
\item 
If $\int(f-\alpha g;I)<\int(f;I)$ for generic $\alpha\in K^\times$, 
then $``result"=``false"$ and 
$\mbox{Object}=\langle I,x_{r+1}-f,x_{r+2}-g\rangle)$. 
\item 
If $\int(f-\alpha g;I)=\int(f;I)$ for generic $\alpha\in K^\times$ and there exist $\beta_1,\beta_2\in K^\times$, $\beta_1\neq\beta_2$, 
such that $\int(f-\beta_ig;I)>\int(f;I)$ for $i=1,2$, then $``result"=``false"$ and 
$\mbox{Object}=\langle I,x_{r+1}-(f-\beta_1g), x_{r+1}-(f-\beta_2g)\rangle)$. 
\item 
If $\int(f-\alpha g;I)=\int(f;I)$ for generic $\alpha\in K^\times$ and there exist the unique $\beta\in K^\times$ 
satisfying $\int(f-\beta g;I)>\int(f;I)$, then $(``result", \mbox{Object})=(``not~~false", \beta)$  if $\int(f-\beta g;I)<\infty$, 
and $(``result", \mbox{Object})=(``false", \langle I,x_{r+1}-(f-\beta g)\rangle )$ otherwise.  
\end{enumerate}
\end{Algorithm}
\begin{Lemma}\label{radical test}
Let $I\subset K\llbracket \x\rrbracket$ be an unmixed ideal of dimension one, and $\w \in \T_{\rm loc}(I)$. 
Then $\T_{\rm loc}(I)=\pR\cdot \w$ and $\sqrt{\initial_\w(\p_1)}=\sqrt{\initial_\w(\p_2)}$ for any $\p_1,\p_2\in \Ass I$ 
if and only if $\sqrt{\initial_\w(I)}=\sqrt{\initial_\w(\p)}$ for any $\p\in \Ass I$. 
\end{Lemma}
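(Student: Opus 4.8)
Throughout write $\Ass I=\{\p_1,\dots,\p_l\}$; since $I$ is unmixed, $\Ass I=\Min I$ and $\sqrt I=\bigcap_{j=1}^{l}\p_j$. The plan is to first establish the ``commutation'' identity
\[
\sqrt{\initial_\w(I)}=\bigcap_{j=1}^{l}\sqrt{\initial_\w(\p_j)},
\]
valid for \emph{every} $\w\in\pR^r$, and then to read the equivalence off from it together with Theorem \ref{T(p)=w(p)}. Note that, once this identity is available, the right-hand condition of the lemma (``$\sqrt{\initial_\w(I)}=\sqrt{\initial_\w(\p)}$ for all $\p$'') is exactly the assertion that the ideals $\sqrt{\initial_\w(\p_1)},\dots,\sqrt{\initial_\w(\p_l)}$ all coincide, which is precisely the second half of the left-hand condition; so the only real point is that, under the standing hypothesis $\w\in\T_{\rm loc}(I)$, this coincidence is equivalent to also having $\T_{\rm loc}(I)=\pR\cdot\w$.

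To prove the identity I would first use Lemma \ref{intersection of ideals}(2) to replace $I$ by $\sqrt I=\bigcap_j\p_j$, and then argue by double inclusion. The inclusion ``$\subseteq$'' is formal: every initial form $\initial_\w(f)$ of an element $f\in\bigcap_j\p_j$ lies in each $\initial_\w(\p_j)$, whence $\initial_\w(\bigcap_j\p_j)\subseteq\bigcap_j\initial_\w(\p_j)$ and so $\sqrt{\initial_\w(\bigcap_j\p_j)}\subseteq\bigcap_j\sqrt{\initial_\w(\p_j)}$. For ``$\supseteq$'' the key point is that initial forms with respect to $\w$ do not cancel in products, i.e.\ $\initial_\w(fg)=\initial_\w(f)\initial_\w(g)$, because $K[x_i\mid w_i\neq\infty]$ is a domain; this gives $\initial_\w(J_1)\cdot\initial_\w(J_2)\subseteq\initial_\w(J_1J_2)$ for ideals $J_1,J_2$, hence by induction $\prod_j\initial_\w(\p_j)\subseteq\initial_\w(\prod_j\p_j)\subseteq\initial_\w(\bigcap_j\p_j)$, and taking radicals together with $\sqrt{\prod_jJ_j}=\bigcap_j\sqrt{J_j}$ yields $\bigcap_j\sqrt{\initial_\w(\p_j)}\subseteq\sqrt{\initial_\w(\bigcap_j\p_j)}$.

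With the identity in hand the equivalence is short. For ``$\Rightarrow$'', the hypothesis gives $\sqrt{\initial_\w(\p_1)}=\dots=\sqrt{\initial_\w(\p_l)}$, and the identity turns this into $\sqrt{\initial_\w(I)}=\sqrt{\initial_\w(\p)}$ for every $\p\in\Ass I$. For ``$\Leftarrow$'', the equalities $\sqrt{\initial_\w(\p_1)}=\sqrt{\initial_\w(\p_2)}$ are immediate; to obtain $\T_{\rm loc}(I)=\pR\cdot\w$ I would observe that $\w\in\T_{\rm loc}(I)$ forces $\initial_\w(I)$, hence also $\sqrt{\initial_\w(I)}=\sqrt{\initial_\w(\p_j)}$, to contain no monomial, so $\initial_\w(\p_j)$ contains no monomial, i.e.\ $\w\in\T_{\rm loc}(\p_j)=\pR\cdot\bw(\p_j)$ for every $j$ by Theorem \ref{T(p)=w(p)}; thus every $\bw(\p_j)$ is a positive scalar multiple of $\w$, and therefore $\T_{\rm loc}(I)=\bigcup_j\T_{\rm loc}(\p_j)=\bigcup_j\pR\cdot\bw(\p_j)=\pR\cdot\w$.

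The only step I expect to require care is the no-cancellation identity $\initial_\w(fg)=\initial_\w(f)\initial_\w(g)$ (and the correct reading of $\pR\cdot\w$) when $\w$ has entries equal to $\infty$, so that $\initial_\w$ takes values in the proper subring $K[x_i\mid w_i\neq\infty]$. I would dispatch this exactly as in the proof of Theorem \ref{FTLTV}, passing to the coordinate hyperplanes $\{x_i=0\}$ for the indices $i$ with $w_i=\infty$ and thereby reducing to the case $\w\in\R_+^r$, where every step above is immediate; the remaining bookkeeping is then routine.
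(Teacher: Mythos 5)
Your proof is correct, but it takes a different route from the paper, whose entire argument for this lemma is the remark that ``one can prove this similarly to Theorem \ref{initial in ker}'', i.e.\ via the vanishing loci $V_K(\initial_\w(\cdot))$, liftings to $V_\M(\cdot)$ through Theorem \ref{FTLTV}, and the Nullstellensatz. You instead isolate the purely algebraic identity $\sqrt{\initial_\w(I)}=\bigcap_{\p\in\Ass I}\sqrt{\initial_\w(\p)}$, proved by the same multiplicativity device ($\initial_\w(fg)=\initial_\w(f)\initial_\w(g)$, hence $\prod_\p\initial_\w(\p)\subset\initial_\w(\prod_\p\p)\subset\initial_\w(\bigcap_\p\p)$) that the paper already uses in Lemma \ref{intersection of ideals}, together with Lemma \ref{intersection of ideals}(2) and $\sqrt{J_1J_2}=\sqrt{J_1}\cap\sqrt{J_2}$; the only geometric input you keep is Theorem \ref{T(p)=w(p)}, quoted as a black box for the ray description of $\T_{\rm loc}(\p)$. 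This buys a more elementary and self-contained argument that works for any $\w$ and any ideal with finitely many minimal primes, and it makes transparent that the implication from the two-part condition to the radical equalities uses only the coincidence of the $\sqrt{\initial_\w(\p_j)}$, while the converse is exactly the observation that monomial-freeness of $\initial_\w(I)$ descends to each component and forces all rays $\pR\cdot\bw(\p_j)$ to equal $\pR\cdot\w$; the paper's route stays inside the affinoid-lifting framework it has already built. Your closing caution about infinite entries is reasonable but largely unnecessary: if some $w_i$ are finite, the scalar relating $\w$ to $\bw(\p_j)$ is automatically finite, so $\pR\cdot\w=\pR\cdot\bw(\p_j)$ directly; the only genuinely degenerate case is $\w=(\infty,\dots,\infty)$, where the statement itself is vacuous on the right and fails on the left under the literal reading of $\pR\cdot\w$ — a defect of the statement's bookkeeping rather than of your proof, and irrelevant to its use in Algorithm \ref{main algorithm}, where $\w=\bw(I)\in\N_+^r$.
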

\begin{proof}
One can prove this similarly to Theorem \ref{initial in ker}. 
\end{proof}
The following is the main algorithm in this paper. 
\begin{Algorithm}[Algorithm for deciding irreducibility of reduced algebroid curves]\label{main algorithm}
The following algorithm decides the primeness of a one-dimensional radical ideal $I\subset K\llbracket \x \rrbracket$. \\
{\bf Input:} a one-dimensional radical ideal $I\subset K\llbracket x_1,\dots,x_r \rrbracket$ with $\bw(I)\in \N_+^r$. \\
{\bf Output:} ($``result"$, $J$) 
such that $J$ is a one-dimensional ideal of $K\llbracket x_1,\dots,x_{r'} \rrbracket$, $r'\ge r$, and 
\begin{enumerate}
\item $K\llbracket x_1,\dots,x_r\rrbracket/I\cong K\llbracket x_1,\dots,x_{r'}\rrbracket/J$, 
\item if $I$ is prime, then $``result"=``true"$ and $\bw(J)$ is a tropism of $J$, 
\item if $I$ is not prime, then $``result"=``false"$ and $J$ has at least two tropisms. 
\end{enumerate}
\begin{enumerate}[1{:~}]
\item $\x$\subst $(x_1,\dots,x_r)$, $\w$\subst $\bw(I)=(w_1,\dots,w_r)$. 
\item \pscif{$\initial_\w(I)$ contains monomials}
\item \quad \pscreturn{($``false"$, $I$)}. 
\item \pscendif
\item \pscwhile{$\gcd(\w)\neq 1$}. 
\item \quad \pscif{$\LT_\prec(\sqrt{\initial_\w(I)})\neq \LT_\prec(\Prim(\w))$ for a term order $\prec$} 
\item \quad \quad $G$\subst a system of binomial generators of $\Prim(\w)$
\item \quad\quad \pscforall{$\x^\a-\x^\b\in G$ }
\item \quad\quad\quad $(``flag", \mbox{Object})$\subst  {\bf Parametric\_Test}$(\x^\a,\x^\b,I)$. 
\item \quad\quad\quad \pscif{$``flag"=``false"$}
\item \quad\quad\quad\quad \pscreturn ($``false", \mbox{Object}$). 
\item \quad\quad\quad\pscendif
\item \quad\quad\pscendforall
\item \quad\pscendif
\item \quad Take $f=\x^\a-\alpha\x^\b\in \sqrt{\initial_\w(I)} \backslash \initial_\w(I)$, $N$\subst ${\int(f;I)}$. 
\item \quad \pscif{$N=\infty$}
\item \quad\quad \pscreturn{($``false"$, $\langle I, x_{r+1}-f\rangle$)}. 
\item \quad \pscendif
\item \quad \pscwhile{$N\in \Semi(\w)$}
\item \quad\quad Take $\c\in \N^r$ such that $N=\c\cdot \w$.  
\item \quad\quad $(``flag", \mbox{Object})$\subst  {\bf Parametric\_Test}$(f,\x^\c,I)$. 
\item \quad\quad \pscif{$``flag"=``false"$}
\item \quad\quad\quad \pscreturn ($``false", \mbox{Object}$). 
\item \quad\quad\pscendif
\item \quad\quad $\beta$ \subst \mbox{Object},  $N$\subst ${\int(f-\beta\x^\c;I)}$, $f$\subst $f-\beta\x^\c$. 
\item \quad \pscendwhile
\item \quad $I$\subst $\langle I, x_{r+1}-f\rangle$, $\x$\subst $(\x,x_{r+1})$, $\w$\subst $(\w,N)$, $r$\subst $r+1$. 
\item \quad \pscif{$\initial_\w(I)$ contains monomials}
\item \quad\quad \pscreturn{($``false"$, $I$)}. 
\item \quad \pscendif
\item \pscendwhile
\item \pscreturn ($``true"$, $I$). 
\end{enumerate}
\end{Algorithm}
\begin{proof}
We will give a proof of the correctness of this algorithm. 

From Line 6 to 14: 
Note that $\w=\bw(I)\in \T_{\rm loc}(I)$ and $\gcd(\w)\neq 1$ at Line 6. 
Thus there exists $\p\in \Ass I$ such that $\T_{\rm loc}(\p)=\pR\cdot \w$. 
Note that $\sqrt{\initial_\w(I)})\subset \sqrt{\initial_\w(\p)}$. 
Suppose that $\LT_\prec(\sqrt{\initial_\w(I)})\neq\LT_\prec(\Prim(\w))$ is satisfied. 
Then $\sqrt{\initial_\w(I)}\neq \sqrt{\initial_\w(\p)}$ since $\LT_\prec(\sqrt{\initial_\w(\p)})=\LT_\prec(\Prim(\w))$ by Theorem \ref{initial in ker}. 
Hence $\T_{\rm loc}(I)\neq \pR\cdot \w$, 
or $\T_{\rm loc}(I)= \pR\cdot \w$ and $\sqrt{\initial_\w(\p_1)}\neq \sqrt{\initial_\w(\p_2)}$ for some $\p_1, \p_2 \in \Ass I$ by Lemma \ref{radical test}. 
In both cases, there exist $\x^\a-\x^\b\in \Prim(\w)$, $\alpha\in K$, and $\p'\in \Ass I$, such that 
$\x^\a-\alpha \x^\b\in \sqrt{\initial_\w(\p)}$ and $\x^\a-\alpha \x^\b \not\in \sqrt{\initial_\w(\p')}$. 
Therefore one can conclude that $I$ is not prime by testing {\bf Parametric\_Test}$(\x^\a,\x^\b, I)$ for all $\x^\a-\x^\b\in G$. 
Hence at Line 15, $\sqrt{\initial_\w(I)}=\sqrt{\initial_\w(\p)}$ for all $\p\in \Ass I$, 
and $\sqrt{\initial_\w(I)}\neq \initial_\w(I)$ since $\gcd(\w)\neq 1$. 

At Line 32, i$\w=\bw(I) \in \T_{\rm loc}(I)$ and $\gcd(\w)=1$, and thus $\bw(I)$ is a tropism of $I$. 

The output of this algorithm satisfies the desired conditions by Theorem \ref{primeness} and Theorem \ref{test 1}. 
To complete the proof, it is enough to show that this algorithm terminates in finite time. 

We will prove that the while loop from Line 19 to 26 terminates in finite time. 
If $I$ is prime, then this while loop is the same as the while loop from Line 4 to 8 in Algorithm \ref{Semi(p)}, 
and thus this while loop terminates in finite time. 
Suppose that $I$ is not prime, and assume, to the contrary, that this while loop causes an infinite loop. 
This means that there exist $f\in \sqrt{\initial_\w(I)} \backslash \initial_\w(I)$, 
$\{\beta_i\in K^\times\mid i\in \N\}$ and $\{\c_i\in \N^r\mid i\in \N\}$ satisfying following: 
Let $f_0=f$ and $f_{i+1}=f_i-\beta_i\x^{\c_i}$ for $i\in \N$. Then $\int(f_i;I)=\w\cdot \c_i$, $\int(f_i-\alpha\x^{\c_i};I)=\int(f_i;I)$ for generic $\alpha\in K^\times$, 
and $\beta_i$ is the unique $\beta\in K^\times$ satisfying $\int(f_i-\beta\x^{\c_i};I)>\int(f_i;I)$. 
Since $(\w\cdot\c_i)_{i\in\N}$ is a strictly increasing sequence and $\w\cdot \c_0=\int(f;I)>\ord_\w(f)$, $F:=\lim_{i\to\infty} f_i$ exists and $\initial_\w(F)=f$. 
Let $I=\p_1\cap\dots\cap\p_l$ be the prime decomposition of $I$, 
and define $A_j=K\llbracket\x\rrbracket/\p_j$ and $\phi_j:K\llbracket \x\rrbracket\to K\llbracket t\rrbracket$ as in the proof of Lemma \ref{test 1}. 
As we observed in the proof of Lemma \ref{test 1}, 
the uniqueness of $\beta_i$ shows that $\phi_j(f_i)=\beta_i \cdot\phi_j(\x^{c_i})$ for all $i\in \N$ and $1\le j\le l$. 
Hence $\lim_{i\to\infty} \ord_t(\phi_j(f_i))=\infty$ for all $j$. 
This show that $\phi_j(F)=0$ and thus $F\in \p_j$ for all $j$. 
Hence we conclude $F\in I$ which contradicts to $\initial_\w(F)=f\not\in \initial_\w(I)$. 

When the while loop from Line 5 to 31 completes, the semigroup $\Semi(\w)$ become larger. 
Hence this while loop terminates in finite time by Lemma \ref{acc for semigroup}. 

Therefore we conclude that this algorithm terminates in finite time. 
\end{proof}
\begin{Remark}
If one apply Algorithm \ref{main algorithm} to a non-radical ideal, it takes infinite time to complete the while loop from Line 19 to 26 in general. 
Note that the question of determining whether or not a given ideal is a radical ideal is uncomputable in general (\cite{Teitelbaum} Lemma 1). 
When we consider the implementation in computer algebra systems, we assume that 
$k$ is a computable perfect field (e.g. $\Q$ or $\F_p$), and $I=I_0 K\llbracket \x\rrbracket$ for some  $I_0\subset k[\x]$. 
Since $K[\x]$ is an excellent ring, one can compute $\sqrt{I}=\sqrt{I_0} K\llbracket \x\rrbracket$ in this situation. 
If $\alpha,\beta \in K$ are conjugate over $k$ and $f,g\in k[\x]$, then $\int(f-\alpha g;I)=\int(f-\beta g;I)$. 
Hence if the element $\alpha\in K$ satisfying $\int(f-\alpha g;I)>\int(f;I)$ is unique, 
then $\alpha\in k$. 
Thus all power series appearing in Algorithm \ref{main algorithm} are polynomials over $k$ except for the output. 
\end{Remark}
\begin{Example}[Kuo's Example revisited]\label{Kuo's Example revisited}
Let $F(x,y)=(y^2-x^3)^2-x^7\in K\llbracket x,y\rrbracket$, $\ch(k)\neq 2$. 
Then $\w:=\bw(F)=(4,6)$ and $\initial_\w(F)=(y^2-x^3)^2$. 
Let $g=y^2-x^3$. Then $\int(g;F)=14=(2,1)\cdot \bw(F)=\int(x^2y;F)$. 
We compute $\int(g-\alpha x^2y; F)$ as in Observation \ref{matrix method}. 
The algebroid curve $A=K\llbracket x,y \rrbracket/\langle F(x,y)\rangle$ is a free $K\llbracket x\rrbracket$-module with a basis $\{1,y,y^2,y^3\}$ 
and 
\[
M_x=
\left(
		\begin{array}{cccc}
		x & 0  & 0 & 0  \\
		0 & x  & 0 & 0  \\
		0 & 0  & x & 0  \\
		0 & 0  & 0 & x 
		\end{array}
\right),~~
M_y=
\left(
		\begin{array}{cccc}
		0 & 0  & 0 & -x^6+x^7  \\
		1 & 0  & 0 & 0        \\
		0 & 1  & 0 & 2x^3     \\
		0 & 0  & 1 & 0
		\end{array}
\right).
\]
Since 
\[
\int(g-\alpha x^2y; F)=\ord_x(\det(M_y^2-M_x^3-\alpha M_x^2M_y))=\ord_x((\alpha+1)^2(\alpha-1)^2x^{14}-\alpha^4x^{15}), 
\]
we have $\int(g-\alpha x^2y; F)=15$ if $\alpha= \pm 1$, and $\int(g-\alpha x^2y; F)=14$ otherwise. 
Thus $F$ is reducible, and $\langle F, z_1-(g-x^2y),z_2-(g+x^2y)\rangle \subset K\llbracket x,y,z_1,z_2\rrbracket$ has two tropisms $(2,3,7,8)$ and $(2,3,8,7)$. 
\end{Example}
\begin{Example}\label{space curve}
Let $I=\langle x^3-y^2, (z^2-xy)^2-x^2yz^2\rangle\subset K\llbracket x,y,z\rrbracket$ and set $A=K\llbracket x,y,z\rrbracket/I$. 
Then $\bw(I)=(8,12,10)$, $\initial_{\bw(I)}(I)=\langle x^3-y^2, (z^2-xy)^2\rangle$, and 
$\sqrt{\initial_{\bw(I)}(I)}=\langle x^3-y^2, z^2-xy\rangle=\Prim(8,12,10)$. 
Let $g=z^2-xy$. Then $\int(g;I)=2\cdot 4+6+2\cdot 5=24 =\int(y^2; I)$. 
We compute $\int(g-\alpha y^2;I)$ for $\alpha \in K$ as in Observation \ref{matrix method}. 

As $I+\langle x\rangle=\langle x,y^2,z^4\rangle$, $A$ is a free $K\llbracket x\rrbracket$-module with a basis $\{1,z,z^2,z^3,y,yz,yz^2,yz^3\}$. 
Since $y^3=x^2$ and $z^4=(2x+x^2) yz^2-x^5$ in $A$, one can compute $M_x$, $M_y$ and $M_z$ easily (for example, 
$M_z$ sends $yz^3\in A$ to $yz^4=(2x+x^2)y^2z^2-x^5y=(2x+x^2)x^3\cdot z^2-x^5\cdot y\in A$); 
\[
M_x=xE_8, ~~
M_y=
\left(
		\begin{array}{cc}
		O & x^3 E_4    \\
		E_4 & O \\
		\end{array}
\right),~~
M_z=
\left(
		\begin{array}{cccccccc}
		0 & 0  & 0 & -x^5     & 0 & 0  & 0 & 0   \\
		1 & 0  & 0 & 0         & 0 & 0  & 0 & 0   \\
		0 & 1  & 0 & 0         & 0 & 0  & 0 & (2x+x^2)x^3   \\
		0 & 0  & 1 & 0         & 0 & 0  & 0 &  0  \\
		0 & 0  & 0 & 0         & 0 & 0  & 0 & -x^5\\
		0 & 0  & 0 & 0         & 1 & 0  & 0 & 0   \\
		0 & 0  & 0 & 2x+x^2 & 0 & 1  & 0 & 0   \\
		0 & 0  & 0 & 0         & 0 & 0  & 1 & 0   \\
		\end{array}
\right).
\]
Since 
\begin{eqnarray*}
\int(g-\alpha y^2; I) &=& \ord_x (\det(M_z^2-M_xM_y-\alpha M_y^2))\\ 
&=& \ord_x((\alpha-1)^4(\alpha+1)^4 x^{24}+-2\alpha^2(\alpha+1)^2(\alpha-1)^2x^{25}+{\alpha}^4x^{26}), 
\end{eqnarray*}
we have $\int(g-\alpha x^2y; F)=26$ if $\alpha= \pm 1$, and $\int(g-\alpha x^2y; F)=24$ otherwise. 
Therefore $I$ is not prime, and $\langle I, u_1-(g-y^2), u_2-(g+y^2)\rangle \subset K\llbracket x,y,z, u_1,u_2\rrbracket$ has two tropisms 
$(4,6,5,14,12)$ and $(4,6,5,12,14)$. 
\end{Example}
\noindent{\bf Acknowledgments: }
This research was supported by JST, CREST. 
The author wishes to thank Thomas Markwig and Anders Nedergaard Jensen for their useful comments and discussions. 

\end{document}